\newtheorem{theorem}{Theorem}[section]
\newtheorem{remark}{Remark}[section]
\newtheorem{corollary}[theorem]{Corollary}
\newtheorem{lemma}[theorem]{Lemma}
\newtheorem*{definition*}{Definition}
\newcommand\be{\begin{eqnarray*}}
\newcommand\ee{\end{eqnarray*}}
\newcommand\beq{\begin{equation}}
\newcommand\eeq{\end{equation}}
\newcommand\ben{\begin{eqnarray}}
\newcommand\een{\end{eqnarray}}
\def\E{\mathcal{E}}
\def\RR{\mathcal{R}}
\def\S{\mathcal{S}}
\newcommand{\I}{\mathcal{I}}
\begin{document}
\title{A new perspective on the distance problem over prime fields}
\author{Alex Iosevich\thanks{Department of Mathematics, University of Rochester. Email: {\tt iosevich@math.rochester.edu}}\and
Doowon Koh \thanks{Department of Mathematics, Chungbuk National University. Email: {\tt koh1313@gmail.com}}
\and 
    Thang Pham\thanks{Department of Mathematics,  University of Rochester. Email: {\tt vpham2@math.rochester.edu}}}
    \maketitle
\begin{abstract}
Let $\mathbb{F}_p$ be a prime field, and $\E$ a set in $\mathbb{F}_p^2$. Let $\Delta(\E)=\{||x-y||: x,y \in \E\}$, the distance set of $\E$. In this paper, we provide a quantitative connection between the distance set $\Delta(\E)$ and the set of rectangles determined by points in $\E$. As a consequence, we obtain a new lower bound on the size of $\Delta(\E)$ when $\E$ is sufficiently small, improving a previous estimate due to Lund and Petridis and establishing an approach that should lead to significant further improvements. 
\end{abstract}
\section{Introduction}

Let $\mathbb{F}_p$ be the prime field of order $p$. Given two points $x=(x_1, x_2)$ and $y=(y_1, y_2)$ in $\mathbb{F}_p^2$, the distance between $x$ and $y$ is defined by 
\[||x-y||:=(x_1-y_1)^2+(x_2-y_2)^2.\]
For $\E\subset \mathbb{F}_p^2$, define
$$ \Delta(\E):=\{||x-y||: x,y \in \E \}.$$ 
Bourgain, Katz, and Tao \cite{bkt} proved that if $|\E|=p^\alpha$, $0<\alpha<2$, and $p\equiv 3\mod 4,$ then 
\begin{equation}\label{khoidau} |\Delta(\E)| \gg|\E|^{\frac{1}{2}+\epsilon}\end{equation}
for some $\epsilon=\epsilon(\alpha)>0$, where here and throughout, $X \gg Y$ means that there exists a uniform constant $C$ such that $X \ge CY$. 

The exponent $\frac{1}{2}+\epsilon$ has been quantified and improved over the years. Stevens and De Zeeuw \cite{ffrank} used a new point-line incidence bound and the framework in \cite{bkt} to show that 
\begin{equation}\label{S-Z} |\Delta(\E)| \gg |\E|^{\frac{1}{2}+\frac{1}{30}}=|\E|^{\frac{8}{15}} \end{equation}
under the condition that $|\E| \ll p^{\frac{15}{11}}$ with $p\equiv 3\mod 4.$

Iosevich, Koh, Pham, Shen, and Vinh \cite{I-P} improved this result by using a lower bound on the number of distinct distances between a line and a set in $\mathbb{F}_p^2$, and the additive energy of a set on the paraboloid in $\mathbb F_p^3.$ They proved that for $\E\subset \mathbb{F}_p^2$ with $p\equiv 3\mod 4$, if $|\E|\ll p^{\frac{7}{6}}$, then 
$$ |\Delta(\E)| \gtrsim |\E|^{\frac{1128}{2107}}=|\E|^{\frac{1}{2} + \frac{149}{4214}}.$$
This result has been recently improved by Lund and Petridis in \cite{Ben}, namely, they indicated that 
$$ |\Delta(\E)| \gtrsim |\E|^{\frac{1}{2} + \frac{3}{74}},$$
under the condition that $|\E|\ll p^{8/5}$ with $p\equiv 3\mod 4.$

Here and throughout, $X \gtrsim Y$ means that for every $\epsilon >0$, there exists $C_\epsilon$ such that $ X \le C_\epsilon 
q^\epsilon Y$, and $X\sim Y$ means that $c_3 X\le Y\le c_4X$ for some positive constants $c_3$ and $c_4$, independent of $p.$

\vskip.125in 

\begin{remark} In the setting of arbitrary finite fields $\mathbb{F}_q$, Iosevich and Rudnev \cite{ir} showed that the conclusion (\ref{khoidau}) does not hold. For instance, assume that $q=p^2$, then one can take $\E=\mathbb{F}_p^2$, then it is not hard to see that $|\Delta(\E)|=|\mathbb{F}_p|=|\E|^{1/2}$. Thus, they reformulated the problem in the spirit of the Falconer distance conjecture in the Euclidean space. More precisely, for $\E\subset \mathbb{F}_q^d$, how large does $\E$ need to be to guarantee that $\Delta(\E)$ covers either the whole field or a positive proportion of all elements in $\mathbb{F}_q$? 

Using Fourier analytic methods, Iosevich and Rudnev \cite{ir} proved that for $\E\subset \mathbb{F}_q^d$, if $|\E|\ge 4q^{\frac{d+1}{2}}$, then $\Delta(\E)={\Bbb F}_q$.  It has been shown in \cite{hart} that the exponent $(d+1)/2$ cannot in general be improved when $d$ is odd, even if we only want to recover a positive proportion of all the distances. However, in even dimensional spaces, it has been conjectured that the exponent $(d+1)/2$ can be decreasing to $d/2$, which is in line with the Falconer distance conjecture in the Euclidean space. Chapman, Erdogan, Koh, Hart and Iosevich \cite{CEHIK10} proved that if $\E\subset \mathbb{F}_q^2$, $q$ is prime, $q \equiv 1 \mod 4$ and $|\E|\ge q^{4/3}$, then $|\Delta(\E)|\gg q.$ In the process, they showed that 
if $Cq\le |\E| \le q^{\frac{4}{3}}$ for a sufficiently large constant $C$, then $|\Delta(\E)| \gg \frac{|\E|^{3/2}}{q}$. This result was generalized to arbitrary finite fields in \cite{BHIRP}. See also \cite{KPV} for some recent progress on related problems. 
\end{remark} 

\vskip.125in 

The main purpose of this paper is to provide a connection between the distance set $\Delta(\E)$ and the set of rectangles determined by points in $\E$ in the plane over prime fields, and use this paradigm to improve the known exponents. In particular, we obtain a new lower bound on the size of $\Delta(\E)$ when $\E$ is not too large. 

For $a, b, c\in \mathbb{F}_p^2$, we say that the triple $(a, b, c)$ is a \textit{corner} if 
\[(b-a)\cdot (c-a)=0.\]
For $a, b, c, d\in \mathbb{F}_p^2$, we say that the quadruple $(a, b, c, d)$ forms a \textit{rectangle} if the triples $(a, b, d), (b, a, c), (c, b, d)$, and $(d, a, c)$ are corners. The rectangle $(a, b, c, d)$ is called \textit{non-degenerate} if not all its vertices  are on a line.
For $\E\subset \mathbb{F}_p^2$, let $\square(\E)$ be the number of non-degenerate rectangles determined by points in $\E$.

Our main result is the following.
\bigskip
\begin{theorem}\label{thm1}
Let $\E$ be a set in $\mathbb{F}_p^2$ with $p\equiv 3\mod 4$. Suppose that $|\E|\ll p^{8/5}$, then 
$$ T(\E)\ll |\E|^2\log{|\E|}+|\Delta(\E)|^{\frac{4}{15}}|\E|^{\frac{33}{15}}+|\E|^{\frac{5}{3}}|\Delta(\E)|^{\frac{4}{15}}\square(\E)^{\frac{4}{15}},$$ where $T(\E)$ denotes the number of isosceles triangles determined by $\E$ and $\square(\E)$ is the number of non-degenerate rectangles determined by $\E$, as above. 

Consequently, 
\[|\Delta(\E)|\gg \min \left\lbrace |\E|^{\frac{12}{19}}, ~\frac{|\E|^{\frac{20}{19}}}{\square(\E)^{\frac{4}{19}}}\right\rbrace.\]
\end{theorem}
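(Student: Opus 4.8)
The plan is to prove the theorem in two stages: a cheap lower bound on the isosceles-triangle count $T(\E)$, and the substantial upper bound asserted in the first displayed inequality; the estimate on $|\Delta(\E)|$ then drops out by comparing the two. For the lower bound, for $a\in\E$ and $r\in\F_p$ write $\lambda(a,r)=|\{b\in\E:\|a-b\|=r\}|$, so that $\sum_r\lambda(a,r)=|\E|$ and, since $\lambda(a,r)=0$ unless $r\in\{0\}\cup\Delta(\E)$, the sum has at most $|\Delta(\E)|+1$ nonzero terms. By Cauchy--Schwarz $\sum_r\lambda(a,r)^2\ge |\E|^2/(|\Delta(\E)|+1)$, and since $T(\E)=\sum_{a\in\E}\sum_r\lambda(a,r)^2$ counts exactly the triples $(a,b,c)$ with $\|a-b\|=\|a-c\|$, summing over $a$ gives $T(\E)\gg |\E|^3/|\Delta(\E)|$.

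The heart of the matter is the upper bound. First I would use that $p\equiv 3\bmod 4$ makes $x_1^2+x_2^2$ anisotropic: the zero distance forces coincidence, every nonzero circle is a single orbit of the cyclic rotation group $\mathrm{SO}_2(\F_p)$ of order $p+1$, and perpendicular bisectors are honest lines with non-isotropic normals. This lets me rewrite the condition $\|a-b\|=\|a-c\|$ as incidence of the apex $a$ with the perpendicular bisector of $\{b,c\}$, giving $T(\E)=|\E|^2+\sum_{\ell}w(\ell)\,|\E\cap\ell|$, where $\ell$ ranges over bisector lines, $|\E\cap\ell|$ counts admissible apexes, and $w(\ell)$ counts the point-pairs of $\E$ whose bisector is $\ell$ (so that $\sum_\ell w(\ell)=|\E|^2-|\E|$). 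I would then split this weighted incidence sum dyadically according to the sizes of $|\E\cap\ell|$ and $w(\ell)$, bound the incidences between $\E$ and each rich class of lines by the Stevens--De Zeeuw bound $I(P,L)\ll |P|^{11/15}|L|^{11/15}+|P|+|L|$ (the source of the exponents $\tfrac{11}{15}$ and $\tfrac{4}{15}=1-\tfrac{11}{15}$), and control the accumulated weights through their second moment. The mechanism to aim for is that, after discarding the degenerate rotations $\pm\mathrm{id}$ (the collapses $b=c$ and the midpoint configurations, which together with the dyadic summation produce the $|\E|^2\log|\E|$ term), the coincidences feeding the second moment can be organized so as to be governed by the rectangle count $\square(\E)$, while the number of available radii enters as $|\Delta(\E)|$; balancing the resulting contributions is what produces the terms $|\Delta(\E)|^{4/15}|\E|^{33/15}$ and $|\E|^{5/3}|\Delta(\E)|^{4/15}\square(\E)^{4/15}$, and tracking the regime in which Stevens--De Zeeuw is valid across all dyadic scales is what pins the hypothesis $|\E|\ll p^{8/5}$.

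Granting the upper bound, the consequence is pure bookkeeping. Combining it with $T(\E)\gg |\E|^3/|\Delta(\E)|$, one of the three right-hand terms must dominate $|\E|^3/|\Delta(\E)|$. If the first dominates then $|\Delta(\E)|\gtrsim |\E|$; if the second dominates then multiplying through by $|\Delta(\E)|$ gives $|\Delta(\E)|^{19/15}\gg |\E|^{12/15}$, i.e. $|\Delta(\E)|\gg |\E|^{12/19}$; and if the third dominates then $|\Delta(\E)|^{19/15}\gg |\E|^{4/3}\square(\E)^{-4/15}$, i.e. $|\Delta(\E)|\gg |\E|^{20/19}\square(\E)^{-4/19}$. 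Taking the minimum over the three cases yields the stated bound.

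The main obstacle I anticipate is precisely the identification of the controlling second moment with $\square(\E)$ rather than with the a priori larger count of isosceles trapezoids that a naive perpendicular-bisector energy produces: the ``new perspective'' of the paper should be exactly to set up the double counting so that only genuine rectangles survive as coincidences, and making this exact, with the degenerate contributions correctly absorbed into the $|\E|^2\log|\E|$ term, is the delicate step. A secondary technical point is checking the hypotheses of the Stevens--De Zeeuw incidence bound uniformly over all dyadic classes of lines, which is what ultimately forces the range $|\E|\ll p^{8/5}$.
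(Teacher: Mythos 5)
Your skeleton coincides with the paper's: the lower bound $T(\E)\gg|\E|^3/|\Delta(\E)|$ via Cauchy--Schwarz is exactly how the paper closes the argument, your three-case deduction of the ``consequently'' statement is correct bookkeeping, and the upper bound on $T(\E)$ is indeed obtained from incidences between $\E$ and the multiset of perpendicular bisectors via Stevens--De Zeeuw, with the second moment of the line weights equal to the bisector energy $Q(\E)=|\{(a,b,c,d)\in\E^4\colon l_{ab}=l_{cd}\}|$. (The paper does not redo this incidence computation; it quotes Lund--Petridis for $T(\E)\ll|\E|^2\log|\E|+|\E|^{5/3}|Q(\E)|^{4/15}$, which is where $4/15$ and the range $|\E|\ll p^{8/5}$ come from.) The genuine gap is that the one new ingredient of the theorem --- the inequality $|Q(\E)|\ll|\Delta(\E)|\left(\square(\E)+|\E|^2\right)$, which upon substitution produces exactly the two terms $|\Delta(\E)|^{4/15}|\E|^{33/15}$ and $|\E|^{5/3}|\Delta(\E)|^{4/15}\square(\E)^{4/15}$ --- is precisely the step you defer as ``the delicate step'' you ``anticipate'' being an obstacle. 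Flagging it is not proving it, and without it your second moment is only bounded by the count of pairs of pairs sharing a bisector (your ``isosceles trapezoids''), which is a priori much larger than $|\Delta(\E)|\cdot\square(\E)$; the stated upper bound on $T(\E)$ then does not follow.

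The missing mechanism is as follows. Partition the pairs $(a,b)$ with $\|a-b\|\ne 0$ into classes $S_i$ according to the bisector $l_{ab}$, so $|Q(\E)|=\sum_i|S_i|^2$. Fix a representative $(a,b)\in S_i$; any other $(c,d)\in S_i$ satisfies $(d-c,\|d\|-\|c\|)=\lambda\cdot(b-a,\|b\|-\|a\|)$ for some $\lambda\ne 0$, and then $\|c-d\|=\lambda^2\|a-b\|$, so (with $\|a-b\|$ fixed) at most $O(|\Delta(\E)|)$ values of $\lambda$ can occur. Cauchy--Schwarz over $\lambda$ gives $|S_i|^2\ll|\Delta(\E)|\sum_{\lambda}|S_{i\lambda}|^2$, and $\sum_{i,\lambda}|S_{i\lambda}|^2$ counts quadruples with $(b,\|b\|)-(a,\|a\|)=(d,\|d\|)-(c,\|c\|)$, i.e.\ closed parallelograms on the paraboloid lift. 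The identity that $\|b\|+\|c\|-\|a\|=\|b+c-a\|$ is equivalent to $(b-a)\cdot(c-a)=0$, applied to each vertex in turn, shows that every such nondegenerate quadruple is a rectangle, and the degenerate ones (where $(a,b)=(c,d)$) number at most $|\E|^2$; hence $\sum_{i,\lambda}|S_{i\lambda}|^2\le\square(\E)+|\E|^2$. This is the ``new perspective'' of the paper, and it is exactly the piece your proposal leaves unproved; everything else in your write-up is either standard or correctly reduced to it.
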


\bigskip

In order to use Theorem \ref{thm1} to full effect, we need the following upper bound on $\square(\E)$ due to Lewko. 

\begin{lemma}[\cite{Lew}, Theorem 4]\label{x-1755}
Let $\E$ be a set in $\mathbb{F}_p^2$ with $p\equiv 3\mod 4$. Suppose that $|\E|\ll p^{26/21}$, then we have 
\[\square(\E)\lesssim |\E|^{\frac{99}{41}}.\]
\end{lemma}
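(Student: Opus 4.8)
Since Lemma \ref{x-1755} is quoted from Lewko, the plan is to reconstruct its proof as a point–line incidence estimate, with the hypothesis $p\equiv 3\bmod 4$ playing the enabling structural role: then $-1$ is a non-square, the form $x_1^2+x_2^2$ is anisotropic, no nonzero vector is self-orthogonal, and for every line $\ell$ through the origin the orthogonal line $\ell^\perp$ is a \emph{distinct} line through the origin. First I would repackage the four right-angle conditions in the definition of a rectangle into one clean parametrisation: writing $b=a+u$, $d=a+v$, $c=a+u+v$, the quadruple $(a,b,c,d)$ is a non-degenerate rectangle iff $u\cdot v=0$ with $u,v$ linearly independent (anisotropy forces this to be equivalent to $u,v\neq 0$). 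Hence
\[\square(\E)\;\sim\;\#\{(a,u,v): u\cdot v=0,\ u,v\text{ lin. indep.},\ a,\,a+u,\,a+v,\,a+u+v\in\E\}.\]
Equivalently, in terms of diagonals, a rectangle is a pair of chords of $\E$ sharing the endpoint-sum $m=a+c=b+d$ and having equal squared length $\rho$; this identifies $\square(\E)$ with an $L^2$/energy quantity $\sum_{m,\rho}g(m,\rho)^2$, where $g(m,\rho)$ counts chords with sum $m$ and squared length $\rho$. It is this $L^2$ shape that makes an incidence bound applicable.

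Next I would convert the energy into incidences. Fixing the vertex $a$ and translating it to the origin, I group the admissible vectors $u$ by direction: for each line $\ell$ through the origin the vector $u$ lies on $\ell$, its partner $v$ must lie on the distinct perpendicular line $\ell^\perp$, and the closure condition reads $u+v\in\E-a$. A dyadic (popularity) decomposition of $\E$ — organised by how many points lie on each rich line and how often each sum $m$ or direction is attained — then bounds $\square(\E)$ by a sum of incidence counts $I(P_j,L_j)$ between explicitly built point sets $P_j$ (from sums and differences of $\E$) and line families $L_j$ (encoding the perpendicularity together with the equal-length constraint). To each piece I would apply the point–line incidence bound of Stevens and De Zeeuw \cite{ffrank}, $I(P,L)\lesssim |P|^{3/4}|L|^{3/4}+|P|+|L|$, and then resum and optimise the dyadic split, which should collapse to $\square(\E)\lesssim |\E|^{99/41}$.

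The hard part is twofold. First is the numerology: the exponent $99/41$ is not produced by any single application of the incidence bound but only emerges after choosing the dyadic weights so that, following Cauchy--Schwarz and Stevens--De Zeeuw, the competing exponents balance at the optimum; obtaining exactly $99/41$ rather than something weaker demands that the bookkeeping stay tight at every range. Second, and more delicate, the Stevens--De Zeeuw bound is valid only while $|P|$ and $|L|$ remain below an appropriate power of $p$, beyond which the main term $|P|^{3/4}|L|^{3/4}$ ceases to dominate the error terms; propagating this threshold through all the dyadic regimes is exactly what forces the hypothesis $|\E|\ll p^{26/21}$. Throughout, one must never discard the perpendicularity (equal-diagonal) constraint: dropping it degenerates the count into the additive energy of $\E$, which may be as large as $|\E|^3$, so the entire improvement over the trivial bound rests on faithfully encoding that one extra $\F_p$-condition as a genuine reduction in the number of incident lines.
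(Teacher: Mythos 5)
You are trying to prove a statement that this paper never proves: Lemma \ref{x-1755} is imported verbatim from Lewko (\cite{Lew}, Theorem 4), and the paper's ``proof'' is simply that citation. So your reconstruction has to stand on its own, measured against Lewko's argument. The structural part of your sketch is sound and does match the known point of view: for $p\equiv 3 \bmod 4$ the form is anisotropic, your parametrization $b=a+u$, $d=a+v$, $c=a+u+v$ with $u\cdot v=0$ correctly captures all four corner conditions, and your identification of $\square(\E)$ with the energy $\sum_{m,\rho}g(m,\rho)^2$ is exactly the additive energy of $\{(x,\|x\|):x\in\E\}$ on the paraboloid in $\mathbb{F}_p^3$ --- the same algebra as equation (\ref{role}) in the proof of Lemma \ref{hay}, and the perspective of Lewko's paper.

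There are, however, two genuine gaps. First, you misquote the key tool. The Stevens--De Zeeuw bound is not $I(P,L)\lesssim |P|^{3/4}|L|^{3/4}+|P|+|L|$: the exponent pair $(3/4,3/4)$ is the geometric mean of the two Cauchy--Schwarz bounds $|P||L|^{1/2}+|L|$ and $|L||P|^{1/2}+|P|$, so it is never better than the trivial bound and holds over any field with no constraint involving $p$; no dyadic reorganization of a Cauchy--Schwarz-strength estimate can produce an exponent like $99/41$, which beats the $|\E|^{5/2}$-type bound that trivial/Rudnev-style arguments give. The actual Stevens--De Zeeuw exponent is $11/15$ in each variable (this is precisely the source of the $4/15$ exponents in Lemma \ref{ben} and Theorem \ref{thm1}), and it comes with a size constraint relative to $p$ that must be tracked. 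Second, and more fundamentally, everything that constitutes the content of Lewko's theorem --- the explicit construction of the point and line families from $\E$, the dyadic decomposition, the balancing that yields exactly $99/41$, and the propagation of the $p$-constraints that yields the hypothesis $|\E|\ll p^{26/21}$ --- is deferred with ``should collapse to.'' You candidly label this ``the hard part,'' but it is the entire theorem: as written, the proposal is a plausible plan rather than a proof, and with the incidence bound in the form you quoted, the plan as stated would fail even in principle.
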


As a consequence of Theorem \ref{thm1}, we obtain a new lower bound on the size of $\Delta(\E)$ when $\E$ is sufficiently small.
\bigskip 
\begin{corollary}\label{thmz}
Let $\E$ be a set in $\mathbb{F}_p^2$ with $p\equiv 3\mod 4$. Suppose that $|\E|\ll p^{\frac{1558}{1489}}$, then we have 
\[|\Delta(\E)|\gtrsim |\E|^{\frac{424}{779}}= |\E|^{\frac{1}{2}+\frac{69}{1558}}.\]
\end{corollary}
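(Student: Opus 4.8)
The plan is to obtain Corollary~\ref{thmz} by inserting Lewko's rectangle bound (Lemma~\ref{x-1755}) into the minimum delivered by Theorem~\ref{thm1}. Since the hypothesis $|\E|\ll p^{1558/1489}$ satisfies $\frac{1558}{1489}<\frac{26}{21}<\frac{8}{5}$, both Theorem~\ref{thm1} and Lemma~\ref{x-1755} apply on the whole range, so beyond this numerical comparison no further checking of the admissible ranges is required.

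Starting from
\[ |\Delta(\E)|\gg \min\left\lbrace |\E|^{\frac{12}{19}},\ \frac{|\E|^{\frac{20}{19}}}{\square(\E)^{\frac{4}{19}}}\right\rbrace \]
I would substitute $\square(\E)\lesssim |\E|^{99/41}$, which gives $\square(\E)^{4/19}\lesssim |\E|^{396/779}$ and hence a lower bound on the second entry:
\[ \frac{|\E|^{\frac{20}{19}}}{\square(\E)^{\frac{4}{19}}}\gtrsim |\E|^{\frac{20}{19}-\frac{396}{779}}=|\E|^{\frac{820-396}{779}}=|\E|^{\frac{424}{779}}. \]
Writing the first entry as $|\E|^{12/19}=|\E|^{492/779}$ and noting $492>424$, both entries of the minimum are $\gtrsim |\E|^{424/779}$ for $|\E|\ge 1$, so the minimum is as well. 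This gives $|\Delta(\E)|\gtrsim |\E|^{424/779}$, and $\frac{424}{779}=\frac12+\frac{69}{1558}$ is exactly the stated exponent.

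The substitution itself is routine exponent arithmetic and in fact already works on the larger range $|\E|\ll p^{26/21}$; the reason to state the corollary only for $|\E|\ll p^{1558/1489}$ is that this threshold is precisely where the new estimate overtakes the standard Fourier bound $|\Delta(\E)|\gtrsim |\E|^{3/2}/p$ recorded in the Remark. Solving $|\E|^{424/779}=|\E|^{3/2}/p$ yields $p=|\E|^{3/2-424/779}=|\E|^{1489/1558}$, i.e. $|\E|=p^{1558/1489}$, so below this value the new bound dominates and above it the classical one is already stronger. Consequently the genuine difficulty does not lie in the corollary, which is a one-line substitution, but in its two inputs: the incidence-and-energy argument underlying Theorem~\ref{thm1} and Lewko's bound on the number of non-degenerate rectangles in Lemma~\ref{x-1755}, where all the real work is concentrated.
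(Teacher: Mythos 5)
Your proposal is correct and is exactly the paper's argument: the authors prove the corollary by directly combining Theorem \ref{thm1} with Lemma \ref{x-1755}, and your exponent arithmetic ($\frac{20}{19}-\frac{4}{19}\cdot\frac{99}{41}=\frac{424}{779}$, with $\frac{12}{19}=\frac{492}{779}$ dominating) together with your explanation of the threshold $p^{1558/1489}$ matches the paper's own remark about when the Chapman et al.\ bound $p^{-1}|\E|^{3/2}$ takes over.
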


We note that the condition $|\E|\ll p^{26/21}$ is replaced by $|\E|\ll p^{\frac{1558}{1489}}$, since in the range $p^{\frac{1558}{1489}}\le |\E|\le p^{\frac{26}{21}}$, the bound $p^{-1}|\E|^{3/2}$ due to Chapman et al. \cite{CEHIK10} is better. 

\medskip

\begin{remark} We observe that our exponent in Corollary \ref{thmz} is ${1}/{2}+{69}/{1558} \approx 0.54428$, and the exponent due to 
Lund and Petridis in \cite{Ben} is $1/2+3/74\approx 0.54054.$  It is important to note that our method is completely different. Moreover, if the Lewko bound $\square(\E)\lesssim |\E|^{\frac{99}{41}}$ is improved to the conjectured bound $\square(\E)\lesssim |\E|^{2}$, then the exponent in Corollary \ref{thmz} would improve to $\approx .6315$. Further progress would result from improving the point line incidence bound used in the proof of Theorem \ref{thm1}. The conjectured point line incidence bound combined with the conjectured bound on $\square(\E)$ would improve the exponent in Corollary \ref{thmz} to $\frac{3}{4}$. This is the limitation of our method. 
\end{remark} 

\vskip.125in 

\begin{remark} In the case of finite subsets of ${\Bbb R}^2$, the sharp bound on the number of rectangles was established by Pach and Sharir (\cite{pach}). This raises the possibility of using this approach in the continuous Euclidean setting. We shall address this issue in the sequel. 
\end{remark} 
\bigskip

\bigskip
Assuming that $\E=A\times A \subset \mathbb{F}_p^2$ has Cartesian product structures, Petridis \cite{Pe} used the point-plane incidence bound due to Rudnev \cite{R} to prove that 
\[|\Delta(A\times A)|=|(A-A)^2+(A-A)^2|\gg |A|^{3/2},\]
under the assumption $|A|\le p^{2/3}$. This result has been extended to all dimensions by Pham, Vinh and De Zeeuw \cite{pham}. 

In the following theorem, we will break the exponent $3/2$ for a variant of the distance function, namely, $(A-A)^2-(A-A)^2$ instead of $(A-A)^2+(A-A)^2$. 
\bigskip
\begin{theorem}\label{thm2}
For $A\subset \mathbb{F}_p$ with $|A|\ll p^{\frac{71}{125}}$, we have 
\[|(A-A)^2-(A-A)^2|\gtrsim |A|^{\frac{3}{2}+\frac{1}{142}}.\]
\end{theorem}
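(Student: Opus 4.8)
The plan is to exploit the one feature that distinguishes the Minkowski form from the Euclidean one: over $\mathbb{F}_p$ we have the factorization $x^2-y^2=(x-y)(x+y)$, whereas $x^2+y^2$ does not split when $p\equiv 3\bmod 4$. I would first reduce to a pinned version of the problem. For $c\in A$ put $P_c:=\{(a-c)^2:a\in A\}$, a set of at most $|A|$ squares. Since $a-c,\,a'-c'\in A-A$, for any $c,c'\in A$ we have $P_c-P_{c'}\subseteq (A-A)^2-(A-A)^2$, so it suffices to produce one pair $(c,c')$ with $|P_c-P_{c'}|\gtrsim |A|^{3/2+1/142}$. By Cauchy--Schwarz, $|P_c-P_{c'}|\ge |A|^4/E(c,c')$, where $E(c,c')$ is the number of solutions of $(a-c)^2-(a''-c)^2=(a'-c')^2-(a'''-c')^2$ with $a,a',a'',a'''\in A$; thus the whole problem becomes an upper bound on this pinned energy.

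The key point is that the minus sign lets me factor the pinned energy equation as $(a-a'')(a+a''-2c)=(a'-a''')(a'+a'''-2c')$, i.e. as a multiplicative equation $UV=U'V'$ on the affine grids $\{(a-a'',\,a+a''-2c)\}$ and $\{(a'-a''',\,a'+a'''-2c')\}$. Equivalently, $E(c,c')$ is the mutual additive energy of the square-sets $P_c$ and $P_{c'}$, and counting its solutions is a point--line incidence problem in $\mathbb{F}_p^2$ once the level hyperbolas $UV=\lambda$ are linearised. To this configuration I would apply the Stevens--de Zeeuw incidence bound, whose exponent $11/15$ underlies the computation; it is strictly stronger, in the range $|A|\ll p^{71/125}$, than the point--plane bound of Rudnev that forces the Euclidean case to stop at $3/2$. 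Feeding the resulting estimate $E(c,c')\lesssim |A|^{5/2-\eta}$ back through Cauchy--Schwarz and optimising $\eta$ against the admissible range of the incidence theorem is what produces the gain $1/142$ and the constraint $|A|\ll p^{71/125}$.

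The main obstacle is twofold. First, one cannot simply Cauchy--Schwarz the full (unpinned) energy: the bijection swapping $(c,d)\leftrightarrow(c',d')$ shows that the eight-variable energies of the $+$ and $-$ forms are equal, so any argument based on the total energy treats the two cases identically and cannot beat $3/2$; the asymmetry appears only after pinning and factoring, which is genuinely available only for the split form. Second, and more seriously, the pinned energy $E(c,c')$ can be as large as $|A|^3$ for a bad pin --- precisely when $P_c$ is close to an arithmetic progression, i.e. when $A-c$ is a set of square roots of an arithmetic progression --- so I must show that good pins exist. This amounts to proving that the additive energy of the squares $\{(a-c)^2:a\in A\}$ is small for at least one well-chosen $c$, which is the technical heart of the argument and where the Stevens--de Zeeuw bound, together with a non-degeneracy argument ruling out the near-progression pins, must be combined.
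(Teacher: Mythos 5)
Your reduction to a pinned energy is sound as far as it goes: with $P_c=\{(a-c)^2:a\in A\}$ one does have $P_c-P_{c'}\subseteq (A-A)^2-(A-A)^2$ and, since the map $a\mapsto (a-c)^2$ is at most $2$-to-$1$, Cauchy--Schwarz gives $|P_c-P_{c'}|\gg |A|^4/E(c,c')$. But the entire theorem now rests on the claim that some pin satisfies $E(c,c')\lesssim |A|^{5/2-\eta}$ with $\eta$ around $1/142$, and this is exactly the step you do not supply. After the factorization, $E(c,c')$ is the (mixed) additive energy of two shifted square-sets, i.e.\ the number of solutions of $(a-a'')(a+a''-2c)=(a'-a''')(a'+a'''-2c')$. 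The bound $E\ll |A|^{5/2}$ is what Rudnev's point--plane theorem gives for this count, and $|A|^{5/2}$ fed into your Cauchy--Schwarz only returns $|A|^{3/2}$ --- precisely the barrier the theorem is supposed to break. Beating $5/2$ for the energy of squares of a general set (even for one well-chosen pin) is not something the Stevens--de Zeeuw incidence bound hands you by ``linearising the hyperbolas'': you would need to exhibit the point set, the line set, and the resulting incidence count, and verify that the exponents actually come out ahead of the point--plane bound in this configuration. You acknowledge that ruling out bad pins is ``the technical heart,'' but that heart is absent, so the argument as written does not close.

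For contrast, the paper avoids pinned energies altogether and runs a dichotomy on $|A-A|$ (after translating so that $0\in A$, whence $A^2-A^2\subseteq (A-A)^2-(A-A)^2$). If $|A-A|\ge |A|^{1+\epsilon}$, the three-variable expander bound $|X-(A-A)^2|\gg |X|^{1/2}|A|$ (Lemma \ref{pham-1}) already gives $|A|^{3/2+\epsilon/2}$. If $|A-A|\le |A|^{1+\epsilon}$, the additive structure of $A$ is exploited through the Rudnev--Shakan--Shkredov mechanism (the fourth-moment bound $E_4(A^2)\lesssim |A|^4M^3$ of Lemma \ref{e4}, which crucially uses the smallness of $M=|A-A|/|A|$) to show $|A^2-A^2|\gtrsim |A|^{1+(9-27\epsilon)/17}$. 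Balancing the two branches forces $\epsilon=1/71$, which is where $3/2+1/142$ and the threshold $p^{71/125}$ actually come from; they are not produced by optimising an incidence exponent. The structural dichotomy is doing the work that your missing ``good pin exists'' lemma would have to do, and it is the part of the argument you would need to replace, not merely reference.
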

In the proof of  Theorem \ref{thm2}, the following theorem which is interesting on its own plays the key role. 
\bigskip
\begin{theorem}\label{pham2}
Let $A$ be a set in $\mathbb{F}_p$. Suppose that $|A||A-A||A^2-A^2|\le p^2$ and $|A-A|=|A|^{1+\epsilon}$ with $0<\epsilon<1/54.$ Then we have
\[|A^2-A^2|\gtrsim |A|^{1+\frac{9-27\epsilon}{17}}.\]
\end{theorem}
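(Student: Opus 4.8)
The plan is to derive the lower bound on $|A^2-A^2|$ from an upper bound on the additive energy of the square set $A^2=\{a^2:a\in A\}$. Let $E$ denote the number of quadruples $(a,b,c,d)\in A^4$ with $a^2-b^2=c^2-d^2$. Since squaring is at most two-to-one we have $|A^2|\gg |A|$, so Cauchy--Schwarz gives $|A^2-A^2|\ge |A^2|^4/E\gg |A|^4/E$. Consequently it is enough to prove the energy bound $E\lesssim |A|^{(42+27\epsilon)/17}$, because $4-(42+27\epsilon)/17=1+(9-27\epsilon)/17$.

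The key algebraic input is the factorisation $a^2-b^2=(a-b)(a+b)$. Setting $s=a-b$ and $u=a+b$ embeds the pair $(a,b)$ injectively into the box $(A-A)\times(A+A)$, and the energy equation becomes $su=tv$ with $(s,u),(t,v)$ in the image. Dropping the linkage between the two coordinates only increases the count, so $E$ is at most the multiplicative energy $E^{\times}(A-A,A+A)=\#\{su=tv: s,t\in A-A,\ u,v\in A+A\}$; writing this as $\sum_{\rho}r_{(A-A)/(A-A)}(\rho)\,r_{(A+A)/(A+A)}(\rho)$ and applying Cauchy--Schwarz bounds it by $\sqrt{E^{\times}(A-A)\,E^{\times}(A+A)}$. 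Thus the whole problem reduces to showing that the difference set and the sum set have \emph{small multiplicative energy}. Here the hypothesis enters decisively: $|A-A|=|A|^{1+\epsilon}$ means $A$ has doubling $|A|^{\epsilon}$, so by the Pl\"unnecke--Ruzsa inequality $|A+A|\lesssim |A|^{1+2\epsilon}$ and, more importantly, $A\pm A$ are themselves sets of very small doubling.

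This is exactly a sum--product phenomenon: an additively structured set cannot have large multiplicative energy. I would bound $E^{\times}(A-A)$ (and likewise $E^{\times}(A+A)$) by feeding the small-doubling information into Rudnev's point--plane incidence theorem, after the standard manoeuvre that writes the number of solutions of $b_1b_2=b_3b_4$ as a point--plane incidence count in $\mathbb{F}_p^3$. The size hypothesis $|A|\,|A-A|\,|A^2-A^2|\le p^2$ is precisely what is needed to keep the degenerate ``main term'' $|P||\Pi|/p$ of the finite-field incidence bound from dominating. Combining the resulting estimate with the Pl\"unnecke bounds on the iterated sumsets of $A-A$ and $A+A$, and optimising the several doubling parameters, is what should produce the exponent $(42+27\epsilon)/17$ and, after substitution, the stated $|A|^{1+(9-27\epsilon)/17}$; the linear dependence on $\epsilon$ is the footprint of the doubling constant $|A|^{\epsilon}$.

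The hard part will be the multiplicative-energy estimate itself. A crude count discards too much: bounding $E$ by incidences of the full grid $A\times A$ with all the lines arising from the factorisation (equivalently, by $E^{\times}$ of the whole box without using the linkage) loses the constraint that only a thin sub-grid is admissible, and it falls well short of the exponent $9/17$; conversely the trivial per-configuration bound is also insufficient on its own. The delicate interplay --- carried out through a dyadic decomposition over the multiplicities $r(s)=|A\cap(A+s)|$, balanced against the incidence bound and fed by the small-doubling hypotheses --- is where the precise constant $9/17$ is generated, and the secondary point to monitor throughout is that the stated size condition keeps the degenerate incidence term under control.
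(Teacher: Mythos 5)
Your reduction to the second--moment energy is the step that fails, and it fails for a structural reason, not a technical one. You propose to prove $|A^2-A^2|\ge |A|^4/E$ with $E=\#\{(a,b,c,d)\in A^4: a^2-b^2=c^2-d^2\}$ and then to establish $E\lesssim |A|^{(42+27\epsilon)/17}$. But the best bound your proposed machinery can deliver for $E$ is of order $|A|^{5/2+O(\epsilon)}$: whether you run the point--plane incidence theorem directly on the equation $(a-b)(a+b)=(c-d)(c+d)$, or pass to $E^{\times}(A-A)$ and $E^{\times}(A+A)$ via Cauchy--Schwarz and use the standard ``energy versus sumset'' consequence of Rudnev's theorem (roughly $E^{\times}(B)\ll |B+B|^{3/2}|B|$) together with Pl\"unnecke, you land at $|A|^{5/2+O(\epsilon)}$, hence at $|A^2-A^2|\gg |A|^{3/2-O(\epsilon)}$ --- exactly the Petridis-type threshold the theorem is designed to beat. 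Your target $|A|^{42/17}=|A|^{2.47\ldots}$ sits strictly below $|A|^{5/2}$, and you give no concrete mechanism that closes that gap of $|A|^{1/34}$; the paragraph where you say the ``delicate interplay \ldots is where the precise constant $9/17$ is generated'' is precisely the missing proof. Worse, even if $|A^2-A^2|$ is large, the second energy $E$ can in principle concentrate on a few popular differences, so the inequality $E\lesssim |A|^{42/17}$ is a genuinely stronger statement than the theorem and may simply be out of reach of a second-moment argument.

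The paper's proof avoids this barrier by working with higher moments. It bounds the \emph{fourth}-order additive energy $E_4(A^2)$ by $|A|^4M^3$ (Lemma 3.4, via Rudnev's point--plane theorem applied to $(x+u)^2-y^2=t$ with $x\in A-A$), and then runs the Rudnev--Shakan--Shkredov clustering argument: it introduces the popular set $P\subset A^2-A^2$ of elements with $\gg |A|/K$ representations, the fibres $P_w=(A^2-A^2)\cap(P-w)$, and the quantity $\mathcal{X}=\sum_w\#\{(u,v)\in P_w^2: u-v\in P\}$, proving $|A|^4\ll\sqrt{E_4(A^2)}\sqrt{\mathcal{X}}$ and then bounding $\mathcal{X}$ by two further incidence arguments (Lemmas 3.6--3.8). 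It is this third ingredient --- the additional saving extracted from the structure of the popular fibres $P_w$, which a bare Cauchy--Schwarz on the second energy cannot see --- that produces the exponent $9/17$. Your factorisation $a^2-b^2=(a-b)(a+b)$ and the appeal to small doubling of $A\pm A$ are a different algebraic starting point from the paper's, but without an analogue of the $E_4$/clustering step the approach cannot cross $3/2$.
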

To prove Theorem \ref{pham2}, we use a new sum-product idea which has been introduced recently by Rudnev, Shakan, and Shkredov \cite[Theorem $3$]{R1}. Note that in \cite{R1} Rudnev et al. proved that for sufficiently small $A$ in $\mathbb{F}_p$, if $|AA|\le |A|^{1+\epsilon}$ for some positive small $\epsilon$, then $|A-A|\ge |A|^{\frac{3}{2}+ c(\epsilon)}$. As a consequence of this result, they derived that $|AA-AA|\gg |A|^{\frac{3}{2}+\frac{1}{56}}$. However, their method does not imply the same result when we replace $A-A$ by $A+A$. Therefore, breaking the exponent $3/2$ on the size of the set $AA+AA$ or the set $(A-A)^2+(A-A)^2$ is still an open question. We refer the interested reader to \cite{R1} for more discussions.
\paragraph{Acknowledgments:} A. Iosevich was partially supported by the NSA Grant H98230-15-1-0319. D. Koh was supported by Basic Science Research Programs through National Research Foundation of Korea funded by the Ministry of Education (NRF-2018R1D1A1B07044469). T. Pham was supported by Swiss National Science Foundation
grant P400P2-183916. 
\section{Proof of Theorem \ref{thm1}}
To prove Theorem \ref{thm1}, we will make use of the following lemmas.

Let $\E$ be a set in $\mathbb{F}_p^2$. For any two points $a, b\in \E$ with $\|a-b\|\ne 0$, we define $l_{ab}$ as the line defined by the equation $\|x-a\|=\|x-b\|;$ namely,
\begin{equation}\label{eq08}x\cdot 2(b-a)=||b||-||a||.\end{equation}
This line is the bisector of the line segment joining  points $a$ and $b$ in $\mathbb{F}_p^2.$

We observe that if there is a pair $(c, d)\in \E\times \E$ such that 
\[(d-c, ||d||-||c||)=\lambda\cdot (b-a, ||b||-||a||)\]
for some $\lambda\ne 0$, then the lines $l_{ab}$ and $l_{cd}$ are the same. 

For $\E\subset\mathbb{F}_p^2$, we define
\[Q(\E):=\{(a, b, c, d)\in \E^4\colon l_{ab}=l_{cd}\}.\] 
For $\E\subset \mathbb{F}_p^2$, let  $L$ be the multi-set of lines $l_{ab}$ with $a, b\in \E$ and $\|a-b\|\ne 0.$ 
Let $T(\E)$ be the number of isosceles triangles determined by points in $\E$. In other words, we have
\begin{equation}\label{defTE} T(\E)=|\{(a,b,c)\in \E^3: \|a-c\|=\|b-c\|\ne 0\}|.\end{equation}  
It is clear that $T(\E)$ is bounded by the number of incidences between $\E$ and $L$. 

Using the point-line incidence bound due to Stevens and De Zeeuw \cite{ffrank}, Lund and Petridis proved the following lemma. 
\bigskip
\begin{lemma}[\cite{Ben}, Proof of Lemma 12]\label{ben}
Let $\E$ be a set in $\mathbb{F}_p^2$ with $|\E|\ll p^{8/5}.$ Then we have
\[T(\E)\ll |\E|^2\log |\E|+|\E|^{5/3}|Q(\E)|^{4/15}.\]
\end{lemma}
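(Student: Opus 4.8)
The plan is to realize $T(\E)$ as a weighted incidence count between $\E$ and the multiset $L$ of perpendicular bisectors, and then to run a dyadic decomposition on the multiplicities, feeding each dyadic family into the Stevens--De Zeeuw point--line incidence bound. The two constraints that drive the estimate are the first and second moments of the multiplicity function, the second being exactly $|Q(\E)|$.

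First I would record the bookkeeping. Since $p\equiv 3 \bmod 4$, we have $\|x\|=0$ iff $x=0$, so every pair of distinct points $a,b\in\E$ has a well-defined bisector $l_{ab}$, and a triple $(a,b,c)$ with $a\ne b$ contributes to $T(\E)$ exactly when $c\in l_{ab}$. Writing $m(\ell)$ for the multiplicity of a line $\ell$ in $L$, i.e.\ the number of ordered pairs $(a,b)\in\E^2$ with $a\ne b$ and $l_{ab}=\ell$, this gives
$$T(\E)\ \le\ \sum_{\ell} m(\ell)\,|\ell\cap\E|\ +\ O(|\E|^2),$$
where the error absorbs the degenerate triples with $a=b$. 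The moment identities I will use are
$$\sum_\ell m(\ell)=|L|=|\E|^2-|\E| \qquad\text{and}\qquad \sum_\ell m(\ell)^2=|Q(\E)|,$$
the second being immediate from the definition of $Q(\E)$, since the number of quadruples $(a,b,c,d)$ with $l_{ab}=l_{cd}=\ell$ is $m(\ell)^2$.

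Next comes the dyadic step. For each $j\ge 0$ set $L_j=\{\ell: m(\ell)\sim 2^j\}$. The two moment identities then yield the crucial pair of bounds
$$|L_j|\ \ll\ \min\!\left(\frac{|\E|^2}{2^j},\ \frac{|Q(\E)|}{2^{2j}}\right),$$
while the maximal multiplicity satisfies $M\ll |\E|$: for $p\equiv 3\bmod 4$ a fixed line $\ell$ and a fixed point $a$ determine $b$ uniquely, since $l_{ab}=\ell$ forces $b=a+\tfrac{\lambda}{2}\vv{n}_\ell$ with $\lambda$ pinned down by the constant term. Summing over scales, $\sum_\ell m(\ell)|\ell\cap\E|\ll \sum_{0\le j\le \log M}2^j\, I(\E,L_j)$, and to each term I apply the Stevens--De Zeeuw bound $I(\E,L_j)\ll |\E|^{11/15}|L_j|^{11/15}+|\E|+|L_j|$. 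The two error terms sum cleanly: $\sum_j 2^j|\E|\ll M|\E|\ll|\E|^2$ from $M\ll|\E|$, and $\sum_j 2^j|L_j|\ll\sum_j|\E|^2\ll|\E|^2\log|\E|$ from the first moment bound. For the main term I substitute the first moment bound when $2^j\le |Q(\E)|/|\E|^2$ and the second when $2^j\ge |Q(\E)|/|\E|^2$; the resulting quantity grows like $2^{4j/15}$ below the crossover scale $2^{j^*}=|Q(\E)|/|\E|^2$ and decays like $2^{-7j/15}$ above it, so the geometric sums are controlled by the value at $j^*$, namely $|\E|^{11/5}\bigl(|Q(\E)|/|\E|^2\bigr)^{4/15}=|\E|^{5/3}|Q(\E)|^{4/15}$. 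Collecting the pieces gives the claimed inequality.

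I expect the only genuinely delicate point to be checking that the Stevens--De Zeeuw bound is valid at every dyadic scale that contributes, and in particular that each $L_j$ stays within the regime where the $|\E|^{11/15}|L_j|^{11/15}$ term governs the estimate; this is exactly where the hypothesis $|\E|\ll p^{8/5}$ is consumed. The extreme scales must be inspected separately---small $j$, where $|L_j|$ approaches $|\E|^2$, and large $j$, where $|L_j|$ may fall below $|\E|$ and the roles of points and lines are effectively swapped---but in both cases the contribution is dominated either by the $|\E|^2\log|\E|$ error or by the tail of the geometric series, so they are harmless. Making the applicability of the incidence bound uniform across these scales under $|\E|\ll p^{8/5}$ is the main obstacle; everything else is the routine optimization sketched above.
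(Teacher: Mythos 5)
The paper does not prove this lemma at all---it is imported verbatim from Lund and Petridis \cite{Ben}, and the argument you reconstruct is exactly theirs: realize $T(\E)$ as a weighted incidence count between $\E$ and the multiset of bisectors, control the dyadic multiplicity classes through the first moment $|\E|^2$ and the second moment $|Q(\E)|$, apply the Stevens--De Zeeuw bound at each scale, and observe that the crossover scale $|Q(\E)|/|\E|^2$ produces the term $|\E|^{5/3}|Q(\E)|^{4/15}$ while the hypothesis $|\E|\ll p^{8/5}$ is what keeps the incidence bound applicable. Your moment identities, the bound $m(\ell)\ll|\E|$ via reflection across a fixed line (using $p\equiv 3\bmod 4$), and the geometric-sum arithmetic at the crossover are all correct, so the proposal matches the cited proof in both strategy and detail.
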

In our next lemma, we give an upper bound of $|Q(\E)|$ in terms of $|\Delta(\E)|$ and $\square(\E)$. 
\bigskip
\begin{lemma}\label{hay}
For $\E\subset \mathbb{F}_p^2$, we have 
\[|Q(\E)|\ll |\Delta(\E)|\left(\square(\E)+|\E|^2\right).\]
\end{lemma}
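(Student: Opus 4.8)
The plan is to exploit the fact that every line in $L$ is a \emph{perpendicular bisector}: equation~\eqref{eq08} says precisely that $l_{ab}$ is the line across which the reflection $\sigma_{l_{ab}}$ interchanges $a$ and $b$. For a line $\ell$, let $r(\ell)$ be the number of ordered pairs $(a,b)\in\E^2$ with $\|a-b\|\neq 0$ and $l_{ab}=\ell$; equivalently $r(\ell)=|\{a\in\E:\sigma_\ell(a)\in\E,\ \sigma_\ell(a)\neq a\}|$, where $\sigma_\ell$ denotes reflection across $\ell$. Since $l_{ab}=l_{cd}$ holds exactly when $(a,b)$ and $(c,d)$ are reflected pairs across a common line $\ell$, we obtain the clean identity
\[
 |Q(\E)|=\sum_{\ell} r(\ell)^2 .
\]

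Next I would refine $r(\ell)$ according to the \emph{width} $w:=\|a-b\|$ of a reflected pair, writing $r(\ell,w)$ for the number of such pairs across $\ell$ of width $w$, so that $r(\ell)=\sum_w r(\ell,w)$. The crucial observation is that every width occurring is a nonzero element of $\Delta(\E)$, so at most $|\Delta(\E)|$ values of $w$ contribute to a fixed $\ell$. Cauchy--Schwarz then gives
\[
 r(\ell)^2=\Big(\sum_w r(\ell,w)\Big)^2\le |\Delta(\E)|\sum_w r(\ell,w)^2 ,
\]
and summing over $\ell$ reduces the lemma to proving $\sum_{\ell}\sum_w r(\ell,w)^2\ll \square(\E)+|\E|^2$.

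The heart of the matter is the geometric content of this last bound: \emph{two distinct reflected pairs across the same line $\ell$ having the same width span a non-degenerate rectangle.} Indeed, if $b=\sigma_\ell(a)$ and $d=\sigma_\ell(c)$, then $\{a,b\}$ and $\{c,d\}$ are symmetric about $\ell$; decomposing each point into its component along $\ell$ and its component along the normal of $\ell$ (which is non-isotropic since it is a multiple of $b-a$ with $\|a-b\|\neq 0$), equality of widths forces the two normal components to agree up to sign, placing $a,b,c,d$ at the four corners of a rectangle with sides parallel and perpendicular to $\ell$. I would check that this rectangle is non-degenerate precisely when $\{a,b\}\neq\{c,d\}$, and that each non-degenerate rectangle arises from only $O(1)$ such configurations, via its two axes of symmetry and the finitely many orderings of its sides. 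Hence the off-diagonal part of $\sum_{\ell,w}r(\ell,w)^2$ is $\ll \square(\E)$, while the diagonal part together with the degenerate configurations $\{a,b\}=\{c,d\}$ is at most the total number of reflected pairs, which is $\le |\E|^2$. Combining this with the Cauchy--Schwarz step yields $|Q(\E)|\ll|\Delta(\E)|\big(\square(\E)+|\E|^2\big)$.

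The step that demands the most care is the bookkeeping of degeneracies in the geometric identification. One must verify that same-width reflected pairs with $\{a,b\}\neq\{c,d\}$ can never be collinear, so that they genuinely contribute to $\square(\E)$ rather than to a degenerate quadruple: if all four points were collinear, their midpoints would coincide at the unique intersection of that line with $\ell$, forcing $\{a,b\}=\{c,d\}$. One must likewise check that coincidences such as $a=c$ force $b=d$ and hence land in the $|\E|^2$ term. Confirming that the rectangle-to-configuration correspondence is truly $O(1)$-to-one, and that this whole argument requires no congruence hypothesis on $p$ (only $\|a-b\|\neq 0$, to make $\sigma_\ell$ well defined), are the remaining points to pin down.
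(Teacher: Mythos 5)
Your proposal is correct and follows essentially the same route as the paper: partition $Q(\E)$ by the common bisector line, refine each class by a parameter taking at most $|\Delta(\E)|$ values, apply Cauchy--Schwarz, and identify the refined energy with non-degenerate rectangles plus an $O(|\E|^2)$ diagonal term. The paper parametrizes the refinement by the dilation factor $\lambda$ (with $\|c-d\|=\lambda^{2}\|a-b\|$ after normalization) and detects rectangles via the paraboloid lift $x\mapsto(x,\|x\|)$, whereas you use the width $\|a-b\|$ and a direct reflection argument; these coincide up to a factor of $2$ in the bookkeeping.
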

\begin{proof}
We partition the set $\{(a, b)\in \E\times \E\colon \|a-b\|\ne 0\}$ into subsets $\{S_i\}_i$ in a way that in each set $S_i$, if  $(a, b), (c, d)\in S_i,$ then $l_{ab}=l_{cd}$. Suppose that there are $n$ such sets $S_i$. 

It is not hard to see that 
$$|Q(\E)|= \sum_{i=1}^n|S_i|^2.$$ We now bound the size of $S_i$ as follows. 

Fix an element $(a, b) \in S_i$. We now partition the set $S_i$ into subsets $\{S_{i\lambda}\}_{\lambda\in \Lambda_i \subset \mathbb F_p^*}$ in a way that if $\lambda\in \Lambda_i$ and $(c, d)\in S_{i\lambda}$, then 
$$\lambda\cdot (2(b-a), ||b||-||a||)=(2(d-c), ||d||-||c||).$$ Note that $\lambda\cdot (2(b-a), ||b||-||a||)=(2(d-c), ||d||-||c||)$ is equivalent to $\lambda\cdot (b-a, ||b||-||a||)=(d-c, ||d||-||c||)$

Suppose that for each $i$ there are $m_i$ such subsets. Namely, $|\Lambda_i|=m_i$  for $i=1,2,\ldots, n.$

We observe that if $(u, v)\in S_{i\lambda}$ and $(c, d)\in S_{i\lambda'}$, then 
$$(v-u, ||v||-||u||)=(v, ||v||)-(u, ||u||)=(\lambda/\lambda')\cdot (d-c, ||d||-||c||)=(\lambda/\lambda')\cdot \left((d, ||d||)-(c, ||c||)\right).$$

We now show that $m_i\ll |\Delta(\E)|$. Indeed, assume that $||a-b||=1$, then for any pair $(c, d)\in S_{i\lambda}$ with $\lambda\in \Lambda_i$,  we have $||c-d||=\lambda^2\in \Delta(\E)$. Therefore, $m_i\ll |\Delta(\E)|$. 

Hence,  it follows by the Cauchy-Schwarz inequality that 
\[|S_i|^2=\left(\sum_{\lambda\in \Lambda_i}|S_{i\lambda}|\right)^2\ll |\Delta(\E)|\sum_{\lambda\in \Lambda_i}|S_{i\lambda}|^2.\]
On the other hand, we observe that $\sum_{1\le i\le n}\sum_{\lambda\in \Lambda_i}|S_{i\lambda}|^2$ is equal to  the number of quadruples $(a, b, c, d)\in \E^4$ such that $a\ne b, c\ne d$ and $(b-a, ||b||-||a||)=(d-c, ||d||-||c||)$. We note that the relation 
\[(b-a, ||b||-||a||)=(d-c, ||d||-||c||)\]
is equivalent to 
\begin{equation}\label{role}(b, ||b||)-(a, ||a||)+(c, ||c||)=(d, ||d||).\end{equation}
From this equation, we have 
\[||b||+||c||-||a||=||b+c-a||.\]
This can be rewritten as 
\[(b-a)\cdot (c-a)=0.\]
Thus, $(a, b, c)$ is a corner. If we switch the roles of $(a, ||a||), (b, ||b||), (c, ||c||),$ and $(d, ||d||)$ in (\ref{role}), then we will be able to show that $(c, a, d)$, $(d, c, b)$, and $(b, d, a)$ are also corners. This means that $(b, a, c, d)$ is a rectangle. 

We note that if $(a, b)=(c, d)$ then the rectangle $(b, a, c, d)$ is degenerate. However, the number of such degenerate rectangles is at most $|\E|^2$.  In other words, we have proved that 
\[\sum_{1\le i\le n}\sum_{\lambda\in \Lambda_i}|S_{i\lambda}|^2\le\square(\E)+|\E|^2.\]
This completes the proof of the lemma.
\end{proof}
\paragraph{Proof of Theorem \ref{thm1}:}
For $t\in \mathbb{F}_p$, let $\nu(t)$ be the number of pairs $(x, y)\in \E\times \E$ such that $||x-y||=t$. By the Cauchy-Schwarz inequality (or see the proof of Theorem 1.1 in \cite{I-P}), we have 
\begin{equation}\label{Doowon1}\sum_{t\in \mathbb{F}_p}\nu(t)^2\le |\E| |\{(a,b,c)\in \E^3: \|a-b\|=\|a-c\|\}| =|\E| \cdot T(\E),\end{equation}
where the last equality follows since $p\equiv 3\mod 4,$ by assumption, and the definition of $T(\E)$ given in \eqref{defTE}.

It follows from Lemmas \ref{ben} and \ref{hay} that 
\begin{equation}\label{Doowon2}T(\E)\ll |\E|^2\log{|\E|}+|\Delta(\E)|^{\frac{4}{15}}|\E|^{\frac{33}{15}}+|\E|^{\frac{5}{3}}|\Delta(\E)|^{\frac{4}{15}}\square(\E)^{\frac{4}{15}}.\end{equation}
Combining (\ref{Doowon1}) and (\ref{Doowon2}), we have 
\[\sum_{t\in \mathbb{F}_p}\nu(t)^2\ll |\E|^3\log{|\E|}+|\Delta(\E)|^{\frac{4}{15}}|\E|^{\frac{48}{15}}+|\E|^{\frac{8}{3}}|\Delta(\E)|^{\frac{4}{15}}\square(\E)^{\frac{4}{15}}.\]
Applying the Cauchy-Schwarz inequality and using the above inequality, we have 
\[|\Delta(\E)|\gg \frac{|\E|^4}{\sum_{t\in \mathbb{F}_p}\nu(t)^2} \gg \frac{|\E|^4}{|\E|^3\log{|\E|}+|\Delta(\E)|^{\frac{4}{15}}|\E|^{\frac{48}{15}}+|\E|^{\frac{8}{3}}|\Delta(\E)|^{\frac{4}{15}}\square(\E)^{\frac{4}{15}} } .\]
Solving this inequality, we get the desired result. $\hfill\square$
\paragraph{Proof of Corollary \ref{thmz}:} The proof follows directly from Theorem \ref{thm1} and Lemma \ref{x-1755}. 
\bigskip

\section{Proof of Theorem \ref{thm2}}
To prove Theorem \ref{thm2}, we will make use of the following results. The first lemma was given by Pham, Vinh and De Zeeuw in \cite{pham}. 
\bigskip
\begin{lemma}[\cite{pham}, Corollary 3.1]\label{pham-1}
Let $A, X$ be sets in $\mathbb{F}_p$ with $|X|\gg |A|.$ Then we have
\[|X-(A-A)^2|\gg \min\{p, |X|^{1/2}|A|\}.\]
\end{lemma}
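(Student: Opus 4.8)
The plan is to reduce the lemma to a point--plane incidence estimate and then invoke Rudnev's theorem \cite{R}. Put $S = X - (A-A)^2$ and, for $s \in S$, let $f(s)$ count the triples $(x,a,b) \in X \times A \times A$ with $x - (a-b)^2 = s$; then $\sum_{s} f(s) = |X||A|^2$, and Cauchy--Schwarz gives
\[ |S| \;\ge\; \frac{(|X||A|^2)^2}{N}, \]
where
\[ N := \sum_{s} f(s)^2 = \#\{(x,x',a,b,a',b') \in X^2\times A^4 : x - x' = (a-b)^2 - (a'-b')^2\}. \]
Hence it suffices to prove the energy bound $N \ll |A|^3 |X|^{3/2} + |A|^4|X|^2/p$, from which the stated lower bound $\min\{p,\,|X|^{1/2}|A|\}$ follows by dividing.

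First I would turn $N$ into an incidence count. Expanding the defining equation and separating the variables $(a,a',x)$ from $(b,b',x')$ rearranges it into
\[ -2b\cdot a \;+\; 2b'\cdot a' \;+\; 1\cdot(a^2 - a'^2 - x) \;+\; (b^2 - b'^2 + x') \;=\; 0, \]
which is \emph{linear} in the three coordinates $(a,\,a',\,a^2-a'^2-x)$. Thus $N = I(\mathcal P,\Pi)$ for the point set $\mathcal P = \{(a,a',a^2-a'^2-x): a,a'\in A,\, x\in X\}$ and the plane set $\Pi$ cut out by the coefficient vectors $(-2b,2b',1)$ and constants $b^2-b'^2+x'$ with $b,b'\in A,\, x'\in X$. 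The maps $(a,a',x)\mapsto$ point and $(b,b',x')\mapsto$ plane are both injective (one recovers $a,a'$ from the first two coordinates and then $x$ from the third, and similarly $b,b',x'$ from a plane), so $|\mathcal P| = |\Pi| = |A|^2|X|$ and the configuration is perfectly balanced. In the range $|X|^{1/2}|A|\ll p$ one has $|\mathcal P| = |A|^2|X| \ll p^2$, precisely the hypothesis of \cite{R}.

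Rudnev's bound then gives $N \ll |\mathcal P||\Pi|/p + |\mathcal P|^{1/2}|\Pi| + k|\Pi|$, where $k$ is the largest number of collinear points of $\mathcal P$; the first two terms are exactly $|A|^4|X|^2/p + |A|^3|X|^{3/2}$, and substituting into the Cauchy--Schwarz inequality yields $|S| \gg \min\{p,\,|X|^{1/2}|A|\}$ (the $1/p$ term accounts for the saturation value $p$, made rigorous by restricting $X$ to a subset of size $\sim p^2/|A|^2$ when $|X|^{1/2}|A| \gtrsim p$). The hard part will be controlling the collinear term $k|\Pi|$: a line of $\mathcal P$ whose image in the $(a,a')$--plane is a genuine line carries at most $|A|$ points, but a vertical line (fixed $a,a'$, varying $x$) can carry as many as $|X|$, so naively $k$ is as large as $|X|$ and the estimate breaks once $|X| > |A|^2$. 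I would resolve this by noting that every plane of $\Pi$ has third coefficient equal to $1$, so no plane of $\Pi$ contains a vertical line: such rich lines meet each plane in at most one point and hence do not inflate the incidence count, leaving an effective value $k \ll |A|$ for which $k|\Pi| \ll |A|^3|X|$ is dominated by the main term. Alternatively, when $|X| \ge |A|^2$ the trivial inclusion $X = X - \{0\} \subseteq X - (A-A)^2$ already gives $|S|\ge |X| \ge |X|^{1/2}|A|$, so only the range $|A|\le |X|\le |A|^2$ genuinely needs the incidence bound. Assembling these cases completes the proof.
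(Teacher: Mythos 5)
The paper does not prove this lemma at all: it is imported verbatim as Corollary 3.1 of \cite{pham}, so there is no internal proof to compare against. Your reconstruction is correct and is essentially the standard argument behind that corollary (and the same machinery this paper deploys in Section 3): write the additive energy $N=\sum_s f(s)^2$ of the representation function as a point--plane incidence count, apply the Rudnev--de Zeeuw bound, and divide. The bookkeeping checks out: both parametrizations are injective, so $|\mathcal P|=|\Pi|=|A|^2|X|$ and $N=I(\mathcal P,\Pi)$ exactly; the two main terms give $|A|^4|X|^2/p+|A|^3|X|^{3/2}$, hence $|S|\ge |X|^2|A|^4/N\gg\min\{p,\;|X|^{1/2}|A|\}$. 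You also correctly identify and dispose of the only delicate point, the rich vertical lines carrying $|X|$ points of $\mathcal P$: since every plane in $\Pi$ has $Z$-coefficient $1$, no plane contains a vertical line, so such lines never satisfy the ``$k$ points and $k$ planes'' condition, and the remaining lines carry at most $|A|$ points, giving $k|\Pi|\ll|A|^3|X|\le|A|^3|X|^{3/2}$. This is precisely the observation the authors make in the proof of Lemma 3.6 (``planes in $\S$ contain no vertical lines''). Two minor remarks: with the version of Rudnev's theorem as stated in the paper (which carries the $|\RR||\S|/p$ term), the restriction of $X$ to a subset of size $\sim p^2/|A|^2$ is unnecessary, since the term $|A|^4|X|^2/p$ dominating already forces $|S|\gg p$; and your argument never actually uses the hypothesis $|X|\gg|A|$, so you prove a slightly stronger statement than the one quoted, which is harmless.
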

\bigskip
Notice that our next theorem is a more general form of Theorem \ref{pham2}.
A proof of the following theorem will be given  after proving Theorem \ref{thm2}.
\bigskip
\begin{theorem}\label{lmchinh}
Let $A$ be a set in $\mathbb{F}_p$. Suppose that $|A||A- A||A^2-A^2|\le p^2$, $|A- A|=M|A|$, and $|A^2-A^2|=K|A|.$ Then we have that $M^{27}K^{17}\gtrsim |A|^9$ or $M^{13}K^7\gtrsim |A|^5$.
\end{theorem}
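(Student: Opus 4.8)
The plan is to pass from the set $A^2-A^2$ to an additive energy and then to bound that energy from two sides. Let
$$E \;=\; \#\{(a,b,c,d)\in A^4 : a^2-b^2=c^2-d^2\},$$
and for $\mu\in\Fp$ put $r(\mu)=\#\{(a,b)\in A^2 : a^2-b^2=\mu\}$, so that $\sum_\mu r(\mu)=|A|^2$ and $r$ is supported on $A^2-A^2$. First I would record the cheap lower bound: by Cauchy--Schwarz,
$$|A|^4=\Big(\sum_\mu r(\mu)\Big)^2\le |A^2-A^2|\sum_\mu r(\mu)^2 = K|A|\cdot E,$$
whence $E\ge |A|^3/K$. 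The problem is thereby reduced to producing a matching \emph{upper} bound for $E$ in terms of $M$, $K$ and $|A|$; the two alternatives in the conclusion will emerge from the two terms of that upper bound.

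The structural input is the factorization $a^2-b^2=(a-b)(a+b)$. Writing the defining relation of $E$ in the form $(a-c)(a+c)=(b-d)(b+d)$ and setting
$$P=\{(a-c,\,a+c) : a,c\in A\}\subseteq (A-A)\times (A+A),$$
I note that $(a,c)\mapsto(a-c,a+c)$ is injective (as $p$ is odd), so $|P|=|A|^2$; its first coordinates lie in a set of size $\le M|A|$, and by the Pl\"{u}nnecke--Ruzsa inequality its second coordinates lie in $A+A$ with $|A+A|\lesssim M^2|A|$. In these terms $E$ is precisely the number of pairs $((s_1,t_1),(s_2,t_2))\in P\times P$ with $s_1t_1=s_2t_2$, i.e. the energy of $P$ under coordinatewise multiplication. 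This is exactly the quantity governed by the sum--product machinery of Rudnev--Shakan--Shkredov.

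Next I would feed this into Theorem 3 of \cite{R1}; equivalently, one sets up the collision $s_1t_1=s_2t_2$ as an incidence problem between $|P|$ points and $|P|$ planes in $\Fp^3$ and applies Rudnev's point--plane bound \cite{R}. The hypothesis $|A|\,|A-A|\,|A^2-A^2|=MK|A|^3\le p^2$ is precisely what keeps the configuration below the Rudnev threshold and lets one discard the $p^{-1}$ main term, leaving an estimate of the shape
$$E \;\lesssim\; (\text{incidence term}) + (\text{degenerate term}),$$
the first term being controlled by $|P|$ together with the coordinate-set sizes $M|A|$, $|A+A|\lesssim M^2|A|$ and $|A^2-A^2|=K|A|$, and the second recording the contribution of richly collinear points, bounded through the (restricted) Cartesian structure of $P$.

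Finally I would combine $|A|^3/K\le E$ with this upper bound. Since the right-hand side is a sum of two monomials in $|A|,M,K$, the inequality $|A|^3/K\lesssim \max\{\mathrm{term}_1,\mathrm{term}_2\}$ splits into two cases, and clearing denominators and substituting $|A+A|\lesssim M^2|A|$ should return exactly $M^{27}K^{17}\gtrsim|A|^9$ in one case and $M^{13}K^7\gtrsim|A|^5$ in the other. The main obstacle is the middle step: applying the point--plane bound not to a full Cartesian product but to the \emph{restricted} set $P$, whose coordinates are linked by $(s+t)/2,(t-s)/2\in A$, and in particular controlling the degenerate term correctly, since I expect it is that term — rather than the main incidence term — that yields the weaker alternative $M^{13}K^7\gtrsim|A|^5$. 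Carrying the exponents cleanly through Pl\"{u}nnecke--Ruzsa and the two regimes is the only genuinely delicate bookkeeping.
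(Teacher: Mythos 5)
Your reduction to the second (additive) energy is where the argument breaks, and the step you defer as ``delicate bookkeeping'' is in fact the entire content of the theorem. Set $E=\sum_\mu r(\mu)^2$ as you do. Your lower bound $E\ge |A|^3/K$ is fine, but the upper bound you hope to extract from a single application of Rudnev's point--plane theorem cannot be better than $|A|^3$: however you encode the collision $s_1t_1=s_2t_2$ over $P\times P$ with $|P|=|A|^2$, the unavoidable term $|\RR|^{1/2}|\S|$ is already of size $|A|\cdot|A|^2=|A|^3$ (and the restricted, non-Cartesian structure of $P$ does not shrink the point or plane counts, which are exactly $|A|^2$). Comparing $|A|^3/K\lesssim |A|^3$ yields only $K\gtrsim 1$. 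This is the well-known $3/2$-barrier: a second-energy argument with one incidence input can never produce a conclusion as strong as $M^{27}K^{17}\gtrsim |A|^9$, which at $M\sim 1$ forces $|A^2-A^2|\gtrsim |A|^{26/17}$, strictly beyond $|A|^{3/2}$. Citing ``Theorem 3 of Rudnev--Shakan--Shkredov'' as if it were equivalent to this single incidence application is the error: that theorem is precisely the \emph{higher-energy} mechanism designed to circumvent the failure of the $E_2$ route.

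What the paper actually does is different in kind, not in bookkeeping. It bounds the \emph{fourth} energy $E_4(A^2)$ (the number of $8$-tuples with four equal differences) by $|A|^4M^3$ via one incidence argument (Lemma 3.5), then introduces the popular difference set $P=\{x: r_{A^2-A^2}(x)\ge |A|/(2K)\}$ and the quantity $\mathcal{X}=\sum_w|\{(u,v)\in P_w\times P_w: u-v\in P\}|$, proving the coupling inequality $|A|^4\ll \sqrt{E_4(A^2)}\sqrt{\mathcal{X}}$ (Lemma 3.6). The two alternatives in the conclusion come from the two terms in the bound $\mathcal{X}\le M^{15/4}K^{17/4}|A|^{7/4}+M^{7/2}K^{7/2}|A|^{3/2}$, which itself requires two further incidence arguments (Lemmas 3.7--3.9), not from the two terms of a single point--plane estimate as you predict. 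To repair your proposal you would need to replace the $E_2$ upper bound with this $E_4$-plus-popular-set scheme; nothing short of that recovers the stated exponents.
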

\paragraph{Proof of Theorem \ref{thm2}:}
By a translation if necessary, we assume that $0\in A$. Let $\epsilon>0$ be a parameter chosen at the end of the proof. 

 If $|A-A|\ge |A|^{1+\epsilon}$, then it follows from Lemma \ref{pham-1} that 
\begin{equation}\label{PhamC1}|(A-A)^2-(A-A)^2|\gg |A|^{\frac{3}{2}+\frac{\epsilon}{2}},\end{equation}
provided that $|A|\le p^{2/(3+\epsilon)}.$

On the other hand, if $|A-A|\le |A|^{1+\epsilon},$ then we now fall into two cases:

{\bf Case $1$:} If $|A-A||A||A^2-A^2|\ge  p^2$, then we have 
\begin{equation}\label{PhamC2}|(A-A)^2-(A-A)^2|\ge |A^2-A^2| \ge \frac{p^2}{|A-A||A|} \ge \frac{p^2}{|A|^{2+\epsilon}},\end{equation}
where the first inequality above holds since $0\in A.$


{\bf Case $2$:} If $|A-A||A||A^2-A^2|\le p^2$, then it follows from Theorem \ref{pham2} that $|A^2-A^2|\gtrsim |A|^{1+\frac{9-27\epsilon}{17}}$. Since $0\in A$, we have
\begin{equation}\label{PhamC3}|(A-A)^2-(A-A)^2|\ge |A^2-A^2|\gtrsim  |A|^{1+\frac{9-27\epsilon}{17}}.\end{equation}

We choose $\epsilon=1/71.$ Then we obtain the required conclusion in the cases of \eqref{PhamC1} and \eqref{PhamC3}. Choosing $\epsilon=1/71,$ the case of \eqref{PhamC2} also gives the desirable consequence since we have
 \[|(A-A)^2-(A-A)^2|\ge \frac{p^2}{|A|^{2+\epsilon}}\ge |A|^{\frac{3}{2}+\frac{1}{142}},\]
under the condition $|A|\le p^{71/125}.$ This completes the proof of the theorem. $\hfill\square$

\subsection{Proof of Theorem \ref{lmchinh}}
In the proof of Theorem \ref{lmchinh}, we will use the point-plane incidence bound due to Rudnev \cite{R}, but
we use a strengthened version of this theorem, proved by de Zeeuw in \cite{Z}. Let us first recall that if $\RR$ is a set of points in $\mathbb{F}_p^3$ and $\S$ is a set of planes in $\mathbb{F}_p^3$, then the number of incidences between $\RR$ and $\S$, denoted by $I(\RR, \S)$, is the cardinality of the set $\{(r,s)\in \RR\times \S : r\in s\}$.
\bigskip
\begin{theorem}[{\bf Rudnev}, \cite{R}]\label{thm:rudnev}
Let $\RR$ be a set of points in $\mathbb{F}_p^3$ and $\S$ be a set of planes in $\mathbb{F}_p^3$, with $|\RR|\leq |\S|$.
Suppose that there is no line that contains $k$ points of $\RR$ and is contained in $k$ planes of $\S$.
Then
\[ \I(\RR,\S)\ll\frac{|\RR||\S|}{p}+ |\RR|^{1/2}|\S| +k|\S|.\]
\end{theorem}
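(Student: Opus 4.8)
The plan is to prove Rudnev's bound along the route he pioneered: convert the point--plane incidence problem into a problem about \emph{intersecting pairs of lines} in projective three-space, and then estimate the latter by the polynomial method together with the classical geometry of ruled surfaces. Throughout I would pass to $\mathbb{P}^3$ over $\Fp$, and where the algebraic geometry demands it, base-change to the algebraic closure $\overline{\Fp}$.

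First I would homogenize, viewing each $r\in\RR$ and each $s\in\S$ as objects in $\mathbb{P}^3(\Fp)$, and apply the Klein correspondence. After a suitable linear change of coordinates the relation ``$r$ lies on $s$'' can be rewritten as the condition that two \emph{lines} $\ell_r,\ell_s\subset\mathbb{P}^3$ meet: one encodes points and planes as lines whose Plücker coordinates lie on the Klein quadric in $\mathbb{P}^5$, and the bilinear incidence relation becomes the vanishing of the symplectic pairing of the two Plücker vectors, i.e.\ the two lines intersect. This realizes $\I(\RR,\S)$ as the number of intersecting pairs in a configuration $\mathcal{L}$ of $|\RR|+|\S|$ lines, split into two families of sizes $|\RR|$ and $|\S|$, which I can read as a point--line incidence count in three dimensions with the meeting points as points.

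Next I would run the algebraic incidence machine. Taking $\RR$ in the role of the point set, a parameter count ($\sim d^3$ coefficients for degree $d$) produces a polynomial $F$ of degree $d\sim|\RR|^{1/2}$ whose zero surface $Z(F)$ contains them; each line of $\S$ then either lies in $Z(F)$ or meets it in at most $d$ points, so the lines \emph{not} contained in $Z(F)$ contribute $\lesssim d\,|\S|=|\RR|^{1/2}|\S|$. For the lines \emph{inside} $Z(F)$ I would factor into irreducible components and invoke the dichotomy that an irreducible surface of degree $d$ in $\mathbb{P}^3$ either carries $O(d^2)$ lines, or is a plane or a doubly ruled quadric. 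The hypothesis that no line meets $k$ points of $\RR$ while lying in $k$ planes of $\S$ is exactly what caps the contribution of the exceptional planes and reguli, yielding the $k|\S|$ term, while the generic components yield the $|\RR|^{1/2}|\S|$ term.

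Finally I would isolate the finite-field background. Unlike over $\mathbb{R}$, a random plane passes through a fixed point with probability $\sim 1/p$, so any $|\RR|$ points and $|\S|$ planes already carry $\sim|\RR||\S|/p$ incidences on counting grounds alone; this is the first term and must be separated off before the polynomial argument. I expect the genuine obstacle to be precisely the transfer of the Guth--Katz-style count to $\Fp$: over $\mathbb{R}$ one partitions space with a polynomial using the intermediate value theorem, which has no finite-field analogue. The resolution---Rudnev's essential contribution---is to dispense with partitioning and run the degree-$|\RR|^{1/2}$ construction directly, controlling lines-in-surfaces purely through the ruled-surface classification over $\overline{\Fp}$ and the degeneracy parameter $k$, while letting the $|\RR||\S|/p$ term absorb the quasirandom configurations that the real topology would otherwise handle. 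Checking that the Klein reduction is faithful over $\Fp$ with no small-characteristic pathology, and that the lines-per-surface bounds survive base change to $\overline{\Fp}$, are the delicate points that I would treat with the most care.
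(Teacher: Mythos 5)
The first thing to note is that the paper contains no proof of this statement: Theorem~\ref{thm:rudnev} is imported as a black box from Rudnev \cite{R}, in the strengthened form due to de Zeeuw \cite{Z}, and is simply applied (with $k=|A-A|$) in the proof of Lemma~\ref{e4} and its later repetitions. So your proposal can only be measured against Rudnev's published argument, and at the level of architecture you have reproduced it faithfully: the Klein/Pl\"ucker correspondence converting point--plane incidences into intersecting pairs of lines, followed by a Guth--Katz-style polynomial and ruled-surface analysis, is exactly Rudnev's route.

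As a proof, however, your sketch has genuine gaps at the two places where all the difficulty lives. First, the inference ``each line of $\S$ not contained in $Z(F)$ meets it in at most $d$ points, hence contributes $\lesssim d$ incidences'' is invalid: an incidence here is an intersecting pair $(\ell_r,\ell_s)$, and arbitrarily many lines $\ell_r$ may pass through a single one of the $\le d$ points of $\ell_s\cap Z(F)$, so the bound $d|\S|$ does not follow without controlling concurrency of the $\RR$-lines. That concurrency control is precisely what the Guth--Katz machinery (and, in de Zeeuw's version, the hypothesis on $k$) must supply; in your sketch the $k$-hypothesis is spent only on the planes and reguli inside $Z(F)$, which is not enough. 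Second, the term $|\RR||\S|/p$ cannot be ``separated off before the polynomial argument'' on counting grounds: incidence counts do not decompose into a quasirandom part plus a structured part, and no step of your argument actually produces this term (the random-set heuristic only shows the term is necessary in the statement, not how it arises in the proof). In the actual argument it appears for a concrete reason: the classification of surfaces containing many lines (Cayley--Salmon/flecnode, in Koll\'ar's positive-characteristic form) requires the auxiliary degree $d\sim|\RR|^{1/2}$ to satisfy $d<p$; when $|\RR|\gg p^2$ one partitions $\RR$ into pieces of size $O(p^2)$, applies the bounded-degree argument to each, and the summation over the $\sim|\RR|/p^2$ pieces is what yields $|\RR||\S|/p$. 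A smaller slip in the same step: vanishing on $|\RR|$ \emph{points} needs only $d\sim|\RR|^{1/3}$ (with $\sim d^3$ coefficients); the degree $d\sim|\RR|^{1/2}$ is what is required to vanish on $|\RR|$ \emph{lines}, each line imposing $d+1$ conditions, so the parameter count must be run on the Klein images rather than on ``$\RR$ in the role of the point set.''
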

In this section, we assume that $|A-A|=M|A|$ and $|A^2-A^2|=K|A|$. For $A\subset \mathbb{F}_p$, we define $E_4(A^2)$ as the number of $8$-tuples $(x_1, x_2, x_3, x_4, y_1, y_2, y_3, y_4)\in (A^2)^8$ such that
\[x_1-y_1=x_2-y_2=x_3-y_3=x_4-y_4.\]
In the following lemma, we give an upper bound of $E_4(A^2)$ in terms of $|A|$ and $|A-A|$.
\bigskip 
\begin{lemma}\label{e4}
Let $A$ be a set in $\mathbb{F}_p$. Suppose that $|A-A||A||A^2-A^2|\ll p^2$, then we have
\[E_4(A^2)\lesssim |A|^4M^3.\]
\end{lemma}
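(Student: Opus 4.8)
The goal is to bound $E_4(A^2)$, the number of $8$-tuples from $A^2$ with four equal differences, by $|A|^4 M^3$ where $M = |A-A|/|A|$. The plan is to translate this additive-energy count into an incidence problem between points and planes in $\mathbb{F}_p^3$ and then apply the Rudnev bound (Theorem \ref{thm:rudnev}, in its de Zeeuw strengthening).

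First I would interpret $E_4(A^2)$ as a sum of fourth powers of a representation function. Writing $r(s)$ for the number of pairs $(x,y) \in A^2 \times A^2$ with $x - y = s$, we have $E_4(A^2) = \sum_s r(s)^4$, which counts solutions to $a_1^2 - b_1^2 = a_2^2 - b_2^2 = a_3^2 - b_3^2 = a_4^2 - b_4^2$ with all entries in $A$. The key algebraic move is to factor $a^2 - b^2 = (a-b)(a+b)$, so each equation $a_i^2 - b_i^2 = s$ becomes $(a_i - b_i)(a_i + b_i) = s$. This is the standard device that converts a sum-product-flavored energy into a question about points lying on planes.

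Next I would set up the incidence geometry. Fixing one of the four equalities as the pivot, one expects to encode triples coming from the remaining equations as points in $\mathbb{F}_p^3$ and the data of the pivot as planes, so that an incidence $r \in s$ corresponds exactly to a solution of the system. Concretely, I would parametrize using the differences $a_i - b_i \in A - A$ and sums $a_i + b_i$, which is where the factor $M = |A-A|/|A|$ will enter: the point set or plane set will be indexed by elements of $A - A$ together with elements of $A$, giving cardinalities of the shape $|A-A|\,|A| = M|A|^2$. Applying Theorem \ref{thm:rudnev} then yields a bound of the form $\frac{|\RR||\S|}{p} + |\RR|^{1/2}|\S| + k|\S|$; the hypothesis $|A-A||A||A^2-A^2| \ll p^2$ is precisely what is needed to control the first (main) term $\frac{|\RR||\S|}{p}$ and ensure it does not dominate. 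The parameter $k$, the maximum number of collinear points that are also on $k$ common planes, should be bounded by an elementary count (a line typically carries at most $O(|A|)$ of the relevant points), and one must check this contributes no more than $|A|^4 M^3$.

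The main obstacle, as usual with these Rudnev-type arguments, will be choosing the point-plane encoding so that the cardinalities $|\RR|, |\S|$ and the collinearity parameter $k$ combine to give exactly the exponents $4$ on $|A|$ and $3$ on $M$, rather than a weaker bound. In particular one must verify that the dominant incidence term is $|\RR|^{1/2}|\S|$ and that substituting $|\RR|, |\S| \sim M|A|^2$ produces $M^{3}|A|^4$; getting the powers of $M$ right is the delicate bookkeeping. I would also need to confirm that the condition $|A-A||A||A^2-A^2| \ll p^2$ suffices to absorb the $\frac{|\RR||\S|}{p}$ term, since $|A^2-A^2| = K|A|$ appears implicitly through the range of the product $(a-b)(a+b)$. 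Once the encoding is fixed, the remaining steps are the routine verification of these three term-by-term comparisons.
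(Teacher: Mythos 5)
Your proposal has a genuine gap: a single application of the point--plane incidence theorem cannot bound the fourth moment $E_4(A^2)$. The Rudnev bound controls the number of incidences, which translates into the number of solutions of \emph{one} equation relating a point's parameters to a plane's parameters --- a second-moment-type count. $E_4(A^2)$ counts $8$-tuples satisfying a system of three equations, and there is no way to encode that system as a single incidence count between points and planes in $\mathbb{F}_p^3$. Your own bookkeeping already signals the problem: with $|\RR|,|\S|\sim M|A|^2$ the term $|\RR|^{1/2}|\S|$ is $M^{3/2}|A|^3$, not $M^3|A|^4$, and no choice of encoding of the sizes you describe will produce the target exponents, because the object being bounded is the wrong one.

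The missing idea in the paper's proof is a dyadic level-set decomposition: writing $X_k=\{x: r_{A^2-A^2}(x)\ge k\}$ and $n_k=|X_k|$, one has $E_4(A^2)\ll\sum_k n_k k^4$, so it suffices to prove $n_k\lesssim M^3|A|^4/k^4$ for each dyadic $k$. That bound on $|X_k|$ is obtained by (i) a lower bound $N\ge k|X_k||A|$ on the number of solutions of $(x+u)^2-y^2=t$ with $x\in A-A$, $u,y\in A$, $t\in X_k$ (the substitution $a=x+u$, rather than the factorization $a^2-b^2=(a-b)(a+b)$ you propose, is what artificially inflates the count by a factor of $|A|$ and makes the incidence bound effective); (ii) Cauchy--Schwarz to pass to the count $N'$ of solutions of $(x+u)^2-t=(x'+u')^2-t'$; and (iii) only then the point--plane incidence theorem applied to sets of size $|A-A||A||X_k|$ --- note the level set $X_k$ must appear in the point and plane sets, which your parametrization omits. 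The hypothesis $|A-A||A||A^2-A^2|\ll p^2$ is used exactly as you anticipate, to suppress the $|\RR||\S|/p$ term, and the collinearity parameter is $|A-A|$, but without the dyadic decomposition and the Cauchy--Schwarz step the fourth power of $k$ never appears and the lemma cannot be reached.
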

\begin{proof} For each $x\in \mathbb F_p,$  we define $r_{A^2-A^2}(x)$ as the number of pairs $(y,z)\in A^2\times A^2$ such that $y-z=x.$
For $1\le k\ll |A|$, let 
\[n_k:=|X_k:=\{x\in A^2-A^2\colon r_{A^2-A^2}(x)\ge k\}|.\]
By a dyadic pigeon-hole argument, we have
\[E_4(A^2)\ll \sum_{k}n_k\cdot k^4.\]
Thus,  it is enough to show that 
\[n_k\lesssim \frac{M^3|A|^4}{k^4}.\]
To this end, we consider the following equation 
\begin{equation}\label{eq1115}(x+u)^2-y^2=t\end{equation}
with $x\in A-A, u\in A, y\in A, t\in X_k$.  Let $N$ be the number of solutions of this equation.  It is clear that $N\ge k|X_k||A|$. By the Cauchy-Schwarz inequality, we have 
\begin{align*}
N&\le |A|^{1/2}\sqrt{|\{(x, u, t, x', u', t')\in ((A-A)\times A\times X_k)^2\colon (x+u)^2-t=(x'+u')^2-t'\}|}\\&=:|A|^{1/2}\sqrt{N'}.
\end{align*}
To bound $N'$, we let $\RR$ be the set of points of the form $(2x, u', -t+x^2-u'^2)$ with $x\in A-A, u'\in A, t\in X_k$. Let $\S$ be the set of planes of the form $uX-(2x')Y+Z=x'^2-u^2-t'$ with $u\in A, x'\in A-A, t'\in X_k$. We have $|\RR|=|\S|\sim |A-A||A||X_k|$. It is not hard to see that there are at most $|A-A|$ collinear points in $\RR$ except that there are vertical lines supporting $|X_k|$ points, but planes in $\S$ contain no vertical lines. Thus, we can apply Theorem \ref{thm:rudnev} with $k=|A-A|$ to get 
\begin{equation}\label{RHS} N'=I(\RR, \S)\ll |A-A|^{3/2}|A|^{3/2}|X_k|^{3/2}+|A-A|^2|A||X_k|,\end{equation}
where we also use the assumption  $|A-A||A||A^2-A^2|\ll p^2$ and the fact that $|X_k|\le |A^2-A^2|.$
If the second term of the RHS of the inequality \eqref{RHS}  dominates, then we get 
\[|X_k|\le \frac{|A-A|}{|A|}=M.\]
So, $E_4(A^2)\lesssim M|A|^4$. 

If the first term dominates, then from the inequalities $k|X_k||A|\le N\le |A|^{1/2}(N')^{1/2}$ we have 
\[|X_k|\le \frac{|A|^4M^3}{k^4}.\]
With this upper bound of $|X_k|$, we have 
\[E_4(A^2)\lesssim |A|^4M^3.\]
This completes the proof of the lemma. 
\end{proof}

Let $P$ be the subset of $A^2-A^2$ such that for any $x\in P$, we have $r_{A^2-A^2}(x)\ge \frac{|A|}{2K}$. For any $w\in A^2-A^2$, let $P_w:=(A^2-A^2)\cap (P-w)$. One can follow the first paragraph of the proof of \cite[Theorem $3$]{R1} to prove the following lemma. 
\bigskip
\begin{lemma}\label{eq999}
For $A\subset \mathbb{F}_p$, we have
\begin{equation*}|A|^4\ll \sqrt{E_4(A^2)}\sqrt{\mathcal{X}},\end{equation*}
where 
\[\mathcal{X}=\sum_{w\in A^2-A^2}|\{(u, v)\in P_w\times P_w\colon u-v\in P\}|.\]
\end{lemma}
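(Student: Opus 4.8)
The plan is to establish the counting identity
\[\mathcal X = \sum_{w \in A^2-A^2} |\{(u,v) \in P_w \times P_w : u-v \in P\}|\]
as a lower bound for the fourth power of an averaged popularity count, and then relate both $\mathcal X$ and $E_4(A^2)$ to $|A|^4$ via a Cauchy--Schwarz argument on the representation function $r_{A^2-A^2}$. First I would recall the construction of the popular difference set $P$: by a standard dyadic/averaging argument, the contribution to the total number of representations $\sum_x r_{A^2-A^2}(x) = |A|^4$ coming from differences $x$ with $r_{A^2-A^2}(x) < |A|/(2K)$ is at most $|A^2-A^2| \cdot |A|/(2K) = (K|A|)(|A|/(2K)) = |A|^2/2$, which is negligible compared to $|A|^4$. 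Hence the \emph{popular} differences in $P$ carry a positive proportion of all representations, i.e. $\sum_{x \in P} r_{A^2-A^2}(x) \gg |A|^4$. This is the key structural fact that lets us pass to $P$ without losing more than a constant factor.

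Next I would set up the Cauchy--Schwarz step that produces $E_4(A^2)$ and $\mathcal X$ as the two factors. The natural quantity to count is the number of solutions to a system encoding a quadruple of equal differences lying in $A^2-A^2$, twisted by membership in $P$. Concretely, for each $w \in A^2-A^2$ one considers pairs $(u,v) \in P_w \times P_w$ with $u-v \in P$; unwinding the definitions $P_w = (A^2-A^2) \cap (P-w)$ means $u+w, v+w \in P$ and $u,v \in A^2-A^2$, so $\mathcal X$ counts configurations built from several elements of $A^2-A^2$ with certain sums forced into $P$. I would write $|A|^4 \ll \sum_{x \in P} r_{A^2-A^2}(x)$ and then apply Cauchy--Schwarz in the form
\[\Big(\sum_{x \in P} r_{A^2-A^2}(x)\Big)^2 \le \Big(\sum_{x} r_{A^2-A^2}(x)^2 \cdot \mathbf 1\big[\text{extra constraint}\big]\Big)\Big(\sum (\cdots)\Big),\]
arranging the two resulting sums so that one collapses to $E_4(A^2)$ (an eighth-moment / fourth-energy count of $A^2$) and the other collapses to $\mathcal X$. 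Taking the square root of the product inequality then yields $|A|^4 \ll \sqrt{E_4(A^2)}\sqrt{\mathcal X}$.

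The delicate point, and where I expect the main obstacle to lie, is bookkeeping the exact combinatorial meaning of $\mathcal X$ so that the double Cauchy--Schwarz genuinely factors as $E_4(A^2)$ against $\mathcal X$ rather than against some larger or smaller energy. The reference to ``the first paragraph of the proof of \cite{R1}, Theorem~3'' signals that this factorization is the crux: one must choose the grouping of variables so that squaring the popularity count $r_{A^2-A^2}$ (which is natural because $x \in P$ means $r_{A^2-A^2}(x)$ is large) produces precisely the fourth-moment object $E_4(A^2)$, while the residual degrees of freedom in the shift $w$ and the second coordinate assemble into the restricted difference count $\mathcal X$. I would therefore mirror the Rudnev--Shakan--Shkredov argument closely: introduce the shift variable $w$, use that each $u \in P_w$ comes with $\gg |A|/(2K)$ representations in $A^2-A^2$ (so that the constraint $x \in P$ can be upgraded to a quantitative lower bound feeding the final exponent count in Theorem~\ref{lmchinh}), and verify that no Cauchy--Schwarz step overcounts. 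Once the identity $|A|^4 \ll \sqrt{E_4(A^2)}\sqrt{\mathcal X}$ is in hand, the lemma is proved; the quantitative payoff comes later when $E_4(A^2)$ is bounded via Lemma~\ref{e4} and $\mathcal X$ is bounded via Theorem~\ref{thm:rudnev}.
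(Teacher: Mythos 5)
There is a genuine gap here, and your proposed starting point is in fact false. The sum $\sum_{x} r_{A^2-A^2}(x)$ equals $|A^2|^2 \le |A|^2$, not $|A|^4$: each ordered pair in $A^2\times A^2$ contributes to exactly one difference. Consequently the popularity argument only gives $\sum_{x\in P} r_{A^2-A^2}(x)\gg |A|^2$, and the inequality $|A|^4\ll \sum_{x\in P} r_{A^2-A^2}(x)$ on which you build cannot hold. Moreover, the Cauchy--Schwarz step is left as a placeholder, and the natural way to instantiate it from your starting point produces the wrong energy: a single application of Cauchy--Schwarz to $\sum_{x\in P} r_{A^2-A^2}(x)$ yields the second moment $\sum_x r_{A^2-A^2}(x)^2$ (the ordinary additive energy of $A^2$), not the fourth moment $E_4(A^2)$, and no choice of ``extra constraint'' in the form you wrote converts one into the other.

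The missing idea is the explicit parametrization the paper uses (following Rudnev--Shakan--Shkredov). One counts solutions of $x-u=y-v=w$ with $x,y\in P$ and $u,v,w\in A^2-A^2$; such a solution has $u,v\in P_w$, and the paper identifies the total count with $\mathcal X$. A dyadic pigeonholing gives $A'\subset A$ with $|A'|\gtrsim |A|$ such that every $b\in A'$ admits $\gg|A|$ elements $a\in A$ with $a^2-b^2\in P$ (call this set $N_b$). Then for every $b\in A'$, $a,d\in N_b$, $c\in A$, the quadruple $(x,y,u,v)=(a^2-b^2,\ d^2-b^2,\ a^2-c^2,\ d^2-c^2)$ solves the system with $w=c^2-b^2$, which yields $\gtrsim|A|^4$ parametrizing tuples $(a,b,c,d)$. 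The multiplicity with which a fixed solution arises is the size of the translation class of $(a^2,b^2,c^2,d^2)$ (tuples equal up to adding $(t,t,t,t)$), and the sum of the squares of these class sizes is precisely $E_4(A^2)$, since a pair of tuples in the same class is exactly an $8$-tuple with $a^2-a'^2=b^2-b'^2=c^2-c'^2=d^2-d'^2$. Cauchy--Schwarz over the classes then gives
\[|A|^4\lesssim \Bigl(\sum_{C}|C|^2\Bigr)^{1/2}\,\mathcal X^{1/2}=\sqrt{E_4(A^2)}\sqrt{\mathcal X}.\]
Without this construction your outline produces neither the factor $|A|^4$ on the left nor the factor $E_4(A^2)$ on the right.
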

\begin{proof}
We consider the following equation:
\begin{equation}\label{eq-Thang}x-u=y-v=w,\end{equation}
with $x, y\in P$, $u, v, w\in A^2-A^2$. It follows from the definition of $P$ that 
\[|\{(a_1, a_2)\in A\times A\colon a_1^2-a_2^2\in P\}|\gg |A|^2.\] 
One can use the dyadic pigeon-hole to show that there exits a subset $A'\subset A$ with $|A'|\gtrsim |A|$ such that for any $y\in A'$ the number of $x\in A$ such that $x^2-y^2\in P$ is at least $\gg |A|$. For each $y\in A'$, we denote the set of such $x$ by $N_y$. 

For any $c\in A$ and $b\in A'$, we have 
\begin{equation}\label{eq-Thang2}c^2-b^2=(a^2-b^2)-(a^2-c^2)=(d^2-b^2)-(d^2-c^2).\end{equation}
Thus, $(x, y, u, v)=(a^2-b^2,  d^2-b^2, a^2-c^2, d^2-c^2)$, with $b\in A', a, d\in N_b, c\in A$, is a solution of (\ref{eq-Thang}). In other words, there are at least $\gtrsim |A|^4$ tuples $(a, b, c, d)\in A^4$ which gives us solutions of (\ref{eq-Thang}) in the form of (\ref{eq-Thang2}).

For each tuple $(a, b, c, d)\in A^4$, we define $[a^2, b^2, c^2, d^2]$ as the set of tuples $(a', b', c', d')\in A^4$ such that $(a^2, b^2, c^2, d^2)=(a'^2, b'^2, c'^2, d'^2)+(t, t, t, t)$ for some $t\in \mathbb{F}_p$. It is not hard to check that this defines an equivalence class by translation. On the other hand, each equivalence class gives us an unique solution $(x, y, u, v)$ of (\ref{eq-Thang}). Therefore, by the Cauchy-Schwarz inequality, we have 
\begin{align*}
|A|^4&\lesssim \sqrt{\sum_{[a^2, b^2, c^2, d^2]}|[a^2, b^2, c^2, d^2]|^2}\sqrt{|\{(x, y, u, v)\in P^2\times (A^2-A^2)^2\colon x-u=y-v=w\}|}\\
&\lesssim \sqrt{E_4(A^2)}\sqrt{|\{(x, y, u, v)\in P^2\times (A^2-A^2)^2\colon x-u=y-v=w\}|}\\
&=\sqrt{E_4(A^2)} \sqrt{\mathcal{X}}.
\end{align*}
This completes the proof of the lemma.
\end{proof}
In the next step, we need to bound $\mathcal{X}$. To this end, we need the following lemmas.
\bigskip
\begin{lemma}\label{lm1}
For any $w\in A^2-A^2$, suppose that $|A-A||A||A^2-A^2|\ll p^2.$ Then we have
\[T_w:=|\{(u, v)\in P_w\times P_w\colon u-v\in P\}|\ll M^{3/2}K|P_w|^{3/2}+M^2K|P_w|.\]
\end{lemma}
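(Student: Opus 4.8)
The plan is to reduce $T_w$ to a point--plane incidence count, following the proof of Lemma \ref{e4}. First I would use the defining threshold of $P$: every $x\in P$ satisfies $r_{A^2-A^2}(x)\ge |A|/(2K)$, so each pair $(u,v)$ counted by $T_w$ has $u-v\in P$ and therefore $r_{A^2-A^2}(u-v)\ge |A|/(2K)$. Weighting the condition $u-v\in P$ by this lower bound gives
\[
T_w\cdot\frac{|A|}{2K}\;\le\;\sum_{\substack{(u,v)\in P_w\times P_w\\ u-v\in P}} r_{A^2-A^2}(u-v)\;\le\;\big|\{(u,v,a,b)\in P_w\times P_w\times A\times A: u-v=a^2-b^2\}\big|=:E,
\]
so that $T_w\le\tfrac{2K}{|A|}E$, and it suffices to bound $E$.

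Next I would realize $E$ as an incidence count. Observe that $E$ is the additive energy between $P_w$ and the set of squares $A^2$, recorded through the equation $u-a^2=v-b^2$. Possibly after a Cauchy--Schwarz step of the type used in Lemma \ref{e4} (the reduction $N\le|A|^{1/2}\sqrt{N'}$), I would expand the squares so that a genuine bilinear cross term appears --- the device that turns $(x+u)^2-t=(x'+u')^2-t'$ into an incidence in Lemma \ref{e4} --- and thereby build a point set $\RR$ and a plane set $\S$ in $\mathbb{F}_p^3$ of comparable size, with the bilinear form pairing a variable ranging over $A$ against one ranging over $A-A$. As in Lemma \ref{e4}, the only rich lines are vertical and carry at most $|A^2-A^2|$ points, but these are missed by the planes of $\S$; every remaining line supports at most $|A-A|=M|A|$ points, so I would take the collinearity parameter $k=|A-A|$.

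Then I would apply Theorem \ref{thm:rudnev} in de Zeeuw's form. The hypothesis $|A-A||A||A^2-A^2|\ll p^2$, together with $|P_w|\le|A^2-A^2|$, ensures that $|\RR||\S|/p$ is dominated by $|\RR|^{1/2}|\S|$, so Rudnev's bound collapses to $I(\RR,\S)\ll|\RR|^{1/2}|\S|+k|\S|$. Substituting the sizes, unwinding any Cauchy--Schwarz reduction, and dividing by the factor $|A|/(2K)$ from the first step should produce the two terms $M^{3/2}K|P_w|^{3/2}$ and $M^2K|P_w|$ claimed in the lemma.

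The hard part will be the middle step: choosing the parametrization of $\RR$ and $\S$ so that (a) the cross term is truly bilinear and Theorem \ref{thm:rudnev} applies; (b) the point and plane sets stay of the intended size, avoiding the uncontrolled quantity $|A+A|$ that the naive factorization $a^2-b^2=(a-b)(a+b)$ would introduce; and (c) the rich vertical lines carrying up to $|A^2-A^2|$ points are exactly the ones avoided by $\S$, so that the collinearity parameter can be taken as $|A-A|$. A delicate point is the bookkeeping of the powers of $M$ and $K$: a direct identification $E=I(\RR,\S)$ matches the second term but not the first, which is the clue that an intermediate Cauchy--Schwarz (introducing a square root) is needed, exactly as in Lemma \ref{e4}. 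Once this incidence geometry is in place, verifying the sizes and the collinearity count is routine.
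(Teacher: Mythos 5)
Your outline does match the paper's strategy in its main ingredients: weight the condition $u-v\in P$ by $r_{A^2-A^2}\gg|A|/K$, manufacture a bilinear cross term by expanding squares of sums $(A-A)+A$ rather than factoring $a^2-b^2$, and apply Theorem \ref{thm:rudnev} with collinearity parameter $k=|A-A|$, using $|A-A||A||A^2-A^2|\ll p^2$ and $|P_w|\le|A^2-A^2|$ to absorb the $|\RR||\S|/p$ term. However, the step you yourself flag as ``the hard part'' is left unexecuted, and the mechanism you propose for it --- an intermediate Cauchy--Schwarz ``exactly as in Lemma \ref{e4}'' --- is the wrong one. Your count $E=\sum_s r_{P_w-A^2}(s)^2$ is already a symmetric, doubled quantity, so there is nothing left to Cauchy--Schwarz; the $3/2$-power in the target bound comes from the $|\RR|^{1/2}|\S|$ term of Rudnev's inequality once $|\RR|=|\S|=|A-A||A||P_w|$, not from an inserted square root. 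If you do insert a Cauchy--Schwarz (e.g.\ by inflating only one of $a,b$ and squaring), the exponents come out with a stray power of $|A|$ and do not reduce to $M^{3/2}K|P_w|^{3/2}$. The reason your ``direct identification'' seemed to miss the first term is a missing inflation, not a missing square root.

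Concretely, the paper multiplies the left-hand side by $|A|^2$ by writing \emph{both} $a=x+z$ and $b=y+t$ with $x,y\in A-A$ and $z,t\in A$ (each element of $A$ has $|A|$ such representations), so that
\[
T_w\cdot\frac{|A|}{K}\cdot|A|^2\ \ll\ \big|\{(x,y,z,t,u,v)\in(A-A)^2\times A^2\times P_w^2\colon (x+z)^2-u=(y+t)^2-v\}\big|.
\]
The right-hand side is literally the quantity $N'$ from the proof of Lemma \ref{e4} with $X_k$ replaced by $P_w$: points of the form $(2x,t,-u+x^2-t^2)$ against planes $zX-2yY+Z=y^2-z^2-v$, both families of size $|A-A||A||P_w|$, with no Cauchy--Schwarz anywhere. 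Theorem \ref{thm:rudnev} then gives the bound $(|A-A||A||P_w|)^{3/2}+|A-A|^2|A||P_w|$, and dividing by $|A|^3/K$ produces exactly the two terms of the lemma. Note in particular that your final normalization, dividing only by $|A|/(2K)$, is off by the factor $|A|^2$ that this inflation introduces; once that is corrected and the Cauchy--Schwarz detour is dropped, your argument coincides with the paper's.
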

\begin{proof}
For any $p\in P$, we have $r_{A^2-A^2}(p)\gg \frac{|A|}{K}$, so 
\[T_w\cdot \frac{|A|}{K}\cdot |A|^2\ll |\{(x, y, z, t, u, v)\in (A-A)^2\times A^2\times P_w^2\colon (x+z)^2+(y+t)^2=u-v\}|.\]
By repeating the argument of the proof of Lemma \ref{e4} with $P_w$ in the place of $X_k$, we have the number of such tuples $(x, y, z, t, u, v)$ is bounded by 
\[|A-A|^{3/2}|P_w|^{3/2}|A|^{3/2}+|A||P_w||A-A|^2\ll M^{3/2}|A|^3|P_w|^{3/2}+M^2|A|^3|P_w|.\]
This gives us 
\[T_w\ll M^{3/2}K|P_w|^{3/2}+M^2K|P_w|.\]
\end{proof}
Suppose we sort the set $A^2-A^2$ in the following order $A^2-A^2:=\{w_1, \ldots, w_{|A^2-A^2|}\}$ with $r_{P-(A^2-A^2)}(w_i)\ge r_{P-(A^2-A^2)}(w_j)$ for any $i\ge j$. 
\bigskip
\begin{lemma}\label{lm2}
Suppose that $|A^2-A^2||A-A||A|\le p^2$. For any $1\le n \le |A^2-A^2|$, we have 
\[|P_{w_n}|\le M^{3/2}K^2|A|n^{-1/2}.\]
\end{lemma}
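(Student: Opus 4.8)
The plan is to deduce the stated bound from a single estimate on the second moment $\sum_w |P_w|^2$ together with a rearrangement argument. First I would record that, since $P_w=(A^2-A^2)\cap(P-w)$, an element of $P_w$ is a $z\in A^2-A^2$ with $z+w\in P$, so $|P_w|=r_{P-(A^2-A^2)}(w)$; thus the $w_n$ are exactly the points of $A^2-A^2$ listed so that $|P_{w_1}|\ge|P_{w_2}|\ge\cdots$. In particular the $n$ largest values all satisfy $|P_{w_j}|\ge|P_{w_n}|$, whence
\[ n\,|P_{w_n}|^2\le\sum_{j=1}^n|P_{w_j}|^2\le\sum_{w}|P_w|^2. \]
Consequently it suffices to prove the single inequality $\sum_w|P_w|^2\lesssim M^3K^4|A|^2$; taking square roots then yields $|P_{w_n}|\le M^{3/2}K^2|A|\,n^{-1/2}$ for every $n$ simultaneously.

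Next I would identify the second moment as an additive energy. Writing $p_i=q_i+w$ one checks that
\[ \sum_w|P_w|^2=\bigl|\{(p_1,p_2,q_1,q_2)\in P^2\times(A^2-A^2)^2:\ p_1-q_1=p_2-q_2\}\bigr|, \]
i.e. the additive energy $E^+(P,A^2-A^2)$. To bound it I would proceed exactly as in the proofs of Lemmas \ref{e4} and \ref{lm1}: replace each $p_i\in P$ by one of its (at least $\gg|A|/K$ many) representations $\alpha_i^2-\beta_i^2$, losing a factor $(|A|/K)^2$ from the popularity defining $P$, and then feed the resulting count into the de Zeeuw--Rudnev point--plane bound, Theorem \ref{thm:rudnev}.

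The crucial device, as in Lemma \ref{e4}, is that a pure square $\alpha^2$ carries no bilinear term, so before invoking Theorem \ref{thm:rudnev} I would use the decomposition $\alpha=(\alpha-u)+u$ with $u\in A$ free and $\alpha-u\in A-A$; this turns $\alpha^2$ into $(x+u)^2$ and produces the cross terms $2xu$ that couple a point coordinate to a plane coefficient. After a Cauchy--Schwarz step to remove the auxiliary square variables $\beta_i$ (the map $t\mapsto\{\beta:\beta^2=t\}$ being at most two-to-one), the count collapses to the same energy treated in Lemma \ref{e4}, now with $A^2-A^2$ in the role of $X_k$; its Rudnev bound is $\ll(|A-A||A||A^2-A^2|)^{3/2}+|A-A|^2|A||A^2-A^2|$. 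Here the hypothesis $|A||A-A||A^2-A^2|\le p^2$ is precisely what forces the leading term $|\RR||\S|/p$ to be dominated, with collinearity parameter $k\sim|A-A|$ and no plane containing a vertical line. Unwinding the popularity and Cauchy--Schwarz factors should return $\sum_w|P_w|^2\lesssim M^3K^4|A|^2$.

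The hard part will be carrying out this reduction without losing more than a factor $|A|^{o(1)}$. Because $A^2-A^2$ is itself a set of differences of squares, the naive factorization $\alpha^2-\beta^2=(\alpha-\beta)(\alpha+\beta)$ cannot be fed directly into a point--plane configuration, since the two factors are not independent; and a careless use of the decomposition $\alpha=(\alpha-u)+u$ inflates the point set and degrades the bound outside the range $|A|\le M^3K$. The delicate point is therefore to organize the Cauchy--Schwarz so that both the point set $\RR$ and the plane set $\S$ have size $\sim|A-A||A||A^2-A^2|$ — matching Lemma \ref{e4} and the stated condition on $p$ — rather than the larger size produced by the composite variable $\beta^2+q$. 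Once the energy estimate is secured, the rearrangement of the first paragraph finishes the proof, with the implicit $q^\epsilon$ absorbing the dyadic losses.
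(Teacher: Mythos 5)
Your opening reduction is fine: since $|P_w|=r_{P-(A^2-A^2)}(w)$, rearrangement gives $n|P_{w_n}|^2\le\sum_w|P_w|^2$, and the lemma would follow from the single estimate $\sum_w|P_w|^2\lesssim M^3K^4|A|^2$. That target is even true (it follows from the paper's level-set bound by dyadic summation). The gap is that you never prove it, and the route you sketch does not go through as described. The energy $E^{+}(P,A^2-A^2)$ contains two copies of $P$ and two of $A^2-A^2$; after replacing each $p_i$ by a representation $\alpha_i^2-\beta_i^2$ and expanding $\alpha_i=x_i+u_i$, the natural ``planes'' in the Rudnev setup are indexed by quantities such as $p_2-q_2-u^2$, whose multiplicity in $\mathbb{F}_p$ is essentially the representation function $r_{P-(A^2-A^2)}$ you are trying to bound. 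Theorem \ref{thm:rudnev} applies to \emph{sets} of points and planes, not to multisets with large weights, so one cannot simply ``feed the resulting count into the de Zeeuw--Rudnev bound''. Likewise, the claim that the count ``collapses to the same energy treated in Lemma \ref{e4} with $A^2-A^2$ in the role of $X_k$'' is not substantiated: that energy counts tuples with $(x+u)^2-t=(x'+u')^2-t'$ and has no analogue of the extra pair $q_1,q_2\in A^2-A^2$ present in $E^{+}(P,A^2-A^2)$. You identify this as ``the hard part'' and ``the delicate point'', but identifying the obstacle is not the same as overcoming it.

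The paper's proof sidesteps the issue precisely by not working with the full second moment. It fixes a threshold $t$, forms the level set $W_t=\{w\in A^2-A^2: r_{P-(A^2-A^2)}(w)\ge t\}$, and places $W_t$ itself into the point set $\RR=\{(2x,v,x^2+v^2-w)\}$, a genuine set of size $|A-A||A||W_t|$, while the planes are indexed by $(z,y,u)\in A\times(A-A)\times(A^2-A^2)$, again a genuine set. The resulting inequality $t|W_t|\cdot\frac{|A|}{K}\cdot|A|^2\ll |A-A|^{3/2}|A|^{3/2}|W_t|^{1/2}|A^2-A^2|+|A-A|^2|A||A^2-A^2|$ has $|W_t|$ on both sides with different powers, so it can be solved to give $|W_t|\le M^3K^4|A|^2/t^2$, and taking $t=|P_{w_n}|$ yields the lemma immediately. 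If you wish to keep your second-moment formulation, you still need this level-set estimate (or some other device for handling the weighted incidences), at which point the rearrangement step becomes an unnecessary detour rather than a substitute for the missing argument.
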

\begin{proof}
Set $W_t:=\{w\in A^2-A^2\colon r_{P-(A^2-A^2)}(w)\ge t\}$, i.e. $W_t$ is the set of elements $w$ such that there are at least $t$ pairs $(x, y)\in P\times (A^2-A^2)$ such that $w=x-y$. It is not hard to check that 
\[t|W_t|\cdot \frac{|A|}{K}\cdot |A|^2\le |\{(w, x, y ,z , v, u)\in W_t\times (A-A)^2\times A^2\times (A^2-A^2)\colon w=(x+z)^2+(y+v)^2-u\}|.\]

We repeat the argument of the proof of Lemma \ref{e4} with $\RR:=\{(2x, v, x^2+v^2-w)\colon x\in A-A, v\in A, w\in W_t\}$ and $\S:=\{zX+2yY+Z=u\colon z\in A, y\in A-A, u\in A^2-A^2\}$ to bound the number of such tuples $(w, x, y, z, v, u)$ by 
\[|A-A|^{3/2}|A|^{3/2}|W_t|^{1/2}|A^2-A^2|+|A-A|^2|A^2-A^2||A|.\]
If the second term dominates, we have 
\[|W_t|\le \frac{|A-A|}{|A|}=M\le \frac{K^4M^3|A|^2}{t^2}.\]
Otherwise, we get 
\[|W_t|^{1/2}\le \frac{K^2M^{3/2}|A|}{t}.\]
We observe that $r_{P-(A^2-A^2)}(w_n)= |P_{w_n}|$. So, 
\[n=|\{w\in A^2-A^2\colon r_{P-(A^2-A^2)}(w)\ge |P_{w_n}| \}|\le \frac{K^4M^3|A|^2}{|P_{w_n}|^2}.\] 
In other words, 
\[|P_{w_n}|\le M^{3/2}|A|K^2n^{-1/2}.\]
This completes the proof of the lemma. 
\end{proof}
\bigskip

\begin{lemma}\label{TTLem}
Let $A$ be a set in $\mathbb{F}_p$. Suppose that $|A^2-A^2||A-A||A|\le p^2.$ Then we have
\[\mathcal{X} \le M^{15/4}K^{17/4}|A|^{7/4}+M^{7/2}K^{7/2}|A|^{3/2}.\]
\end{lemma}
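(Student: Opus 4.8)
The plan is to bound $\mathcal{X}=\sum_{w\in A^2-A^2}T_w$ by inserting the pointwise estimate of Lemma~\ref{lm1} and then controlling the two resulting sums through the sorted level-set bound of Lemma~\ref{lm2}. First I would write
\[\mathcal{X}=\sum_{w\in A^2-A^2}T_w\ll M^{3/2}K\sum_{w\in A^2-A^2}|P_w|^{3/2}+M^2K\sum_{w\in A^2-A^2}|P_w|,\]
which reduces the problem to estimating $\Sigma_1:=\sum_w|P_w|^{3/2}$ and $\Sigma_0:=\sum_w|P_w|$; note that the summation has exactly $|A^2-A^2|=K|A|$ terms.

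The key input is the decay estimate of Lemma~\ref{lm2}. Listing $A^2-A^2=\{w_1,\dots,w_{K|A|}\}$ in decreasing order of $|P_{w_n}|$ (equivalently, the ordering by $r_{P-(A^2-A^2)}$ used there, since $|P_{w_n}|=r_{P-(A^2-A^2)}(w_n)$), Lemma~\ref{lm2} supplies the uniform bound $|P_{w_n}|\le M^{3/2}K^2|A|\,n^{-1/2}$. Substituting this into $\Sigma_1$ and $\Sigma_0$ converts each sum into a constant times $\sum_n n^{-3/4}$ and $\sum_n n^{-1/2}$ respectively.

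The remaining ingredient is the elementary estimate $\sum_{n=1}^{N}n^{-s}\ll N^{1-s}$ for $0<s<1$, applied with $N=K|A|$. For $\Sigma_1$ this gives $\Sigma_1\lesssim(M^{3/2}K^2|A|)^{3/2}(K|A|)^{1/4}=M^{9/4}K^{13/4}|A|^{7/4}$, and multiplying by the prefactor $M^{3/2}K$ produces the first claimed term $M^{15/4}K^{17/4}|A|^{7/4}$. For $\Sigma_0$ it gives $\Sigma_0\lesssim(M^{3/2}K^2|A|)(K|A|)^{1/2}=M^{3/2}K^{5/2}|A|^{3/2}$, and multiplying by $M^2K$ yields the second term $M^{7/2}K^{7/2}|A|^{3/2}$. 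Adding the two contributions gives the statement.

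I do not anticipate a genuine obstacle: all the real content lies in Lemmas~\ref{lm1} and~\ref{lm2}, and what remains is bookkeeping of exponents. The one point deserving care is that the Lemma~\ref{lm2} bound is applied uniformly in $n$, even for small $n$ where it may exceed the trivial bound $|P_{w_n}|\le|A^2-A^2|=K|A|$; this is harmless because with $0<s<1$ the sum $\sum_{n=1}^{N}n^{-s}$ is of order $N^{1-s}$ and is not dominated by any single head term, so overestimating the first few terms does not change the order of magnitude of $\Sigma_1$ or $\Sigma_0$.
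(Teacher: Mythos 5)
Your proposal is correct and follows essentially the same route as the paper: insert the bound of Lemma \ref{lm1}, apply the sorted decay estimate of Lemma \ref{lm2} with $N=|A^2-A^2|=K|A|$ terms, and sum $n^{-3/4}$ and $n^{-1/2}$ to get $N^{1/4}$ and $N^{1/2}$ respectively. The paper simply compresses this exponent bookkeeping into a single display, and your arithmetic reproduces both terms exactly.
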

\begin{proof}
Applying Lemmas \ref{lm1} and \ref{lm2}, we have
\begin{align*}
\mathcal{X}&\le \sum_{w_n\in A^2-A^2}|T_{w_n}|\ll M^{3/2}K\sum_{w_n}|P_{w_n}|^{3/2}+M^2K\sum_{w_n}|P_{w_n}|\\
& \le M^{15/4}K^{17/4}|A|^{7/4}+M^{7/2}K^{7/2}|A|^{3/2}.
\end{align*} 
This completes the proof of the lemma.
\end{proof}
\paragraph{Proof of Theorem \ref{lmchinh}:}

It follows from Lemmas \ref{e4} and \ref{eq999} that $|A|^2\lesssim M^{\frac{3}{2}}\mathcal{X}^{1/2}.$  Combining Lemma \ref{TTLem} with the above inequation, we obtain
\[|A|^2\lesssim M^{27/8}K^{17/8}|A|^{7/8}+M^{13/4}K^{7/4}|A|^{3/4}.\]
This implies that $M^{27}K^{17}\gtrsim |A|^9$ or $M^{13}K^7\gtrsim |A|^5,$ 
as desired.$\hfill\square$.

\end{document}